\documentclass{elsarticle}
\usepackage{geometry}
\usepackage{bm}
\usepackage{amsmath}
\usepackage{amsthm}
\usepackage{amssymb}
\usepackage{tikz}
\usepackage{mathtools}

\usepackage{float}
\usepackage{tabularx}

\usepackage{caption,subcaption}
\usepackage{color}

\usepackage{tikz}
\newtheorem{theorem}{\textbf{ Theorem} }
\newtheorem{lemma}{\textbf{Lemma } }
\newtheorem{definition}{\textbf{Definition} }
\newtheorem{corollary}{\textbf{Corollary} }
\newtheorem{remark}{\textbf{Remark} }

\newtheorem{property}{\textbf{Property} }
\newtheorem{conjecture}{\textbf{Conjecture} }
\begin{document}
	
	\title{
		Locally  Optimal   Eigenvectors of  Regular  Simplex Tensors
		\tnoteref{mytitlenote}
	}
	\tnotetext[mytitlenote]{
		This work was partially supported by National Natural Science Fund of China (62271090), Chongqing Natural Science Fund (cstc2021jcyj-jqX0023), National Key R\&D Program of China (2021YFB3100800), CCF Hikvision Open Fund (CCF-HIKVISION OF 20210002), CAAI-Huawei MindSpore Open Fund, and Beijing Academy of Artificial Intelligence (BAAI).
	}
	
	\author[add1]{Lei  Wang\corref{coraut}}
		\cortext[coraut]{Corresponding author}
	\ead{wanglei179@mails.ucas.ac.cn}

		\address[add1]{	the School of Microelectronics and Communication
	Engineering, Chongqing University, Chongqing 400044, China}

%
%
%


\begin{abstract}
Identifying  locally     optimal 
solutions 
is an important issue given an   optimization  model.
In this paper, 
we focus on  a  special class of  symmetric tensors  termed  regular simplex tensors,  which is  
   a newly-emerging concept, 
and 
 investigate    its  local optimality  of the related  constrained  nonconvex  optimization model.
 This   is proceeded by  checking the first-order and  second-order necessary condition sequentially.  
Some  interesting directions concerning the regular simplex  tensors, 
including  the  
robust eigenpairs checking   and other  potential issues, 
are  discussed in the end   for    future work.



\end{abstract}

\begin{keyword}
	Regular simplex tensor \sep  eigenpairs \sep   
	 second-order   necessary condition \sep    constrained optimization.
\end{keyword}

\maketitle
\textbf{AMS subject classifications. 15A69,	90C26}


\section{Introduction }
Matrix  analysis and   applications  have been researched for more than 
150 years,
and with very  fruitful  achievements in  both  theoretical  results and  practical applications.
As  the high-order  generalization of  matrix,  tensor   analysis and  applications have  been  given   more   attentions in  recent  years \cite{kolda,2007Numerical,hosvd,tensorrank_qi,TensorPCA,RobustTensorCompletion,TensorDiagonalization,TensorApproximation}.

 Naturally, 
 the concept of  eigen-analysis for  second-order matrix  
has also    been    extended  into    higher-order  tensors.
Different  from  the matrix  case,    there are several  definitions for eigenpairs (a pair refers to the eigenvalue and it   corresponding eigenvector)  of symmetric tensors, such as   D-eigenpairs \cite{Deigen}, H-eigenpairs  \cite{qi},  E-eigenpairs  \cite{lim,qi},   etc.  
In this paper,  we mainly  focus  on  one of them ---  E-eigenpairs, whose eigenvector is  subject to    unit length.  
Without causing  ambiguities, 
in the whole context, 
we use  the  term eigenpair  to refer to the focused  E-eigenpairs  for simplicity. 
Such a concept has been widely exploited  in  theory \cite{SHOPM,ASHOPM,NCM,OTD,Cuicf,hm,Zglobal}  and also  
made 
numerous  applications   in  many disciplines, such as latent  variable mode \cite{Hsu}, hyperspectral  image processing \cite{PSA, NPSA,MSDP}, 
signal processing \cite{tensor_bss1},  hypergraph theory \cite{hypergraph,hypergraph3} and so on.


However,  it  has  also  been   shown  that  most of tensor  problems  are NP hard  \cite{NP-hard},  including computing all  eigenpairs of tensors.
To our best knowledge,  
so far,  there are  only  two  algorithms that  can  obtain  all  eigenpairs of tensor \cite{Cuicf,hm}.
However, 
both of them  still  suffer from high   computational  complexity for large-scale tensor. 
Therefore, most of  previous works aim
 to obtain the maximized or minimized one eigenpairs, and different optimization algorithms were developed, such as \cite{SHOPM,ASHOPM,NCM}.

In  this sense, previous works have 
turned to  paying  attentions to some   special  classes  of  symmetric  tensors,  
and analyzed the  eigen-problem. 
One   widely researched   type   with fruitful results 
is termed 
  orthogonally  decomposable  (odeco) tensors \cite{odst,OTD,Hsu,sr1,cunmu,GloballyConvergent},  
  which 
  is  
  an natural generalization of orthogonal matrix decomposition.
It has been shown that
concerning the odeco tensors, 
the number of 
its 
all  eigenpairs can reach at the upper bound\cite{odst},
and the locally maximized eigenpairs correspond to the 
orthogonal basis.


However, 
unfortunately, 
most of symmetric tensors cannot be 
orthogonally  decomposable. 
In  this sense, 
recently, 
some researchers further  extended 
the case of  odeco tensors into 
a  more generalized one, where the  symmetric  tensor  is  generated  by 
the set of some  equiangular  set (ES) or equiangular tight frame (ETF), which contains 
 of $r$ vectors in 
$n-1$-dimensional space \cite{RobustEigen}. 
For  example, 
the case of  $r=n-1$  serves as a  special one of tight frame,  which  
forms a  standard orthonormal  basis and  corresponds to 
the  odeco tensors.
When $r=n$,  the 
frame is termed the regular simplex one, 
and the  generated tensor is thus called regular simplex tensor. 
In  \cite{RobustEigen}, 
they  discussed that under what  condition 
the eigenvectors will be a robust one.
Later, 
several researchers 
investigated 
the real eigen-structure 
of  all eigenpairs
by  analyzing the first-order 
necessary condition \cite{teneigenstructure}.

However, 
the  local 
optimality of regular simplex tensor,
which is determined by the second-order  necessary condition, 
has not been answered. 
In this paper, 
different from 
the previous works that focused on the original  optimization model, 
we further 
focused on this issue 
and 
equivalently  developed 
a  new reformulated model  to analyze 
its 
local optimality
by checking the first-order and second-order necessary condition 
sequentially.

The  rest of the paper
is organized as  follows.
In Section  \ref{pre},
Some preliminaries related to the subject are provided,  including 
the  definition  for  
tensor  eigenpairs,  and  the  focused regular simplex tensor. 
In  Section  \ref{reformulated},
the 
optimization model  for   regular simplex  tensor eigenpairs
 is  
 reformulated as a equivalent   one with better separable  structure, which is beneficial 
 for  analyzing the eigen-structure of tensor eigenpairs.
 Then, in Section  \ref{station} and \ref{locallymaximized},
 the structure for all  stationary 
 and  
 the locally optimal  solutions  of 
 the newly reformulated model 
 is investigated 
 by checking the first-order and second-order necessary condition   sequentially.
 Some future works are discussed in Section  \ref{futurework}.

\section{Background}\label{pre}
In this part, 
some 
background  preliminaries   related to the subject are provided,  including 
the used notations, 
the  definition  for  
tensor  eigenpairs,  and  the  focused regular simplex tensor. 
The  details are as  follows.

\subsection{Preliminaries}\label{notation}
We    introduce  some  necessary   notations, definitions   and  lemmas used  in   this  article. 
In  this  material,  as adopted in  many tensor-related works \cite{kolda,TensorPCA,9521829},  high-order tensors are denoted
in
boldface Euler script letters, e.g.,
$\mathcal  A $.
Matrices are denoted  in   boldface capital letters, e.g., $\mathbf  A $; vectors are denoted in  
boldface lowercase letters, e.g.,  $\mathbf  a $.
Sets and subsets are denoted  in blackboard bold  capital letters, e.g.,  $\mathbb  A $.

A  $d$th-order tensor is denoted 
$\mathcal A \in \mathbb {R}^{I_1 \times I_2  \times \dots \times I_{d} }$,
where 
$d$
is the order   of
$\mathcal A $, and 
$ I_j $ ($  j \in \{ 1,2,\dots,d \}$)  is  the  dimension  of  
$j$th-mode.
The element of $\mathcal A$,    which  is  indexed  by integer tuples $(i_1,i_2,\dots,i_d) $, is denoted 
($a_{i_1,i_2,\dots,i_d})_
{1 \le i_1 \le I_1, 
	\dots, 
	1 \le i_d \le I_d}
$. 
A   tensor is called    symmetric if its elements remain invariant under any permutation of 
the  indices\cite{kolda}. 
Let    $  T^{m}(\mathbb R^{n}) $ denotes   the  space  of  all  such  real  symmetric    tensors.
Given a $m$th-order  $n$-dimensional symmetric  tensor  $\mathcal S $ and  a  vector $ \mathbf u \in \mathbb {R}^{n \times 1}$, we have
$ 
\mathcal S \mathbf u^{m} =
\sum\limits_{i_1,i_2,\dots,i_m=1}^{n} 
s_{i_1,i_2,\dots,i_m}  u_{i_1} \dots   u_{i_{m}}
$, 
and  $   \mathcal S \mathbf u^{m-1}   $   denotes   a    $n$-dimensional
column   
vector,  whose  $j$th  element   is    
$
(\mathcal S \mathbf u^{m-1})_{j} =
\sum\limits_{i_2,\dots,i_m =1}^{n} 
s_{j,i_2,\dots,i_m}  u_{i_2} \dots   u_{i_{m}}
$\cite{Cuicf}.
Furthermore,   $   \mathcal S \mathbf u^{m-2}   $   is  an    $n  \times  n $  matrix,    whose  $(i, j )$th  element   is  
$
(\mathcal S \mathbf u^{m-2})_{i, j} =
\sum\limits_{i_3,\dots,i_m =1}^{n} 
s_{i,j,i_3,\dots,i_m}  u_{i_3} \dots   u_{i_{m}}.
$

$\mathbf  1_{n}$   denotes   a  $ n  \times  1$ column vector,  and  $\mathbf  I_{n}$   denotes   a  $ n  \times  n$  identity  matrix.
$\mathbf  J_{m \times p}$    and  $\mathbf  J_{m}$    is   a   $ m  \times  p$  matrix  and   a  $ m  \times  m$    square    matrix,     with all  elements equal to 1,  respectively. 
It holds   that\cite{zhang2017matrix}
\begin{equation}\label{Jproperty}
\mathbf  J_{m \times p} = \mathbf  1_{m}  \mathbf  1_{p}^{\mathrm {T}}. 
\end{equation}
Similarly,  $\mathbf  0_{n}$,  $\mathbf  O_{m \times p}$    and  $\mathbf  O_{m}$   are   matrices   with all  elements equal to 0.

$\triangledown$ is  the  gradient  operator.
$ \rm Null( \mathbf A) $ 
denotes the null space  of $\mathbf A$. 
$ \mathbf  P_{\mathbf  A }^{\bot} $ is  the  orthogonal   cpmplement  operator  of  $\mathbf A$.
$``diag(\mathbf u)" $ is an operator which maps the vector  $  \mathbf u  \in  \mathbb R^{n \times 1} $   to 
a $ n \times n $ 
diagonal matrix with its  diagonal  elements to be that of $  \mathbf u$.
$``$ $\circledast$ $"$ is an  elementwise   multiplication  operation, where $   (\mathbf  {A} \circledast  \mathbf  {B})_{ij} =
\mathbf  A_{ij}\mathbf  B_{ij}$.
  For simplicity,  we use
$ \mathbf A ^{\circledast^{p}}$
and
$ \mathbf a ^{\circledast^{p}}$
to denote the $p$-times  elementwise   multiplication  of the matrix
$ \mathbf A  $ and the vector $ \mathbf a $,  respectively.

\begin{definition}[\textbf{outer product}]
	\label{outerprod}
	Given   $m$  vectors 
	$ \mathbf a^{(i) } \in \mathbb {R}^{I_i \times 1}$ 
	($i=1,2, \dots, m$),
	their   outer  product   
	$ \mathbf a^{(1) }
	\circ
	\mathbf a^{(2) }
	\circ  \dots
	\circ
	\mathbf a^{(m) } 
	$  
	is   a    $ m$th-order  tensor denoted $   \mathcal A$,  with  a size  of  
	$ I_1 \times I_2  \times \dots \times I_m  $.  
	And  its    element is    the  product  of    the  corresponding  vectors'   elements, i.e., 
	$ 
	a_{i_1,i_2,\dots,i_m}
	= 
	\mathbf a^{(1) }_{i_1}
	\mathbf a^{(2) }_{i_2}
	\dots
	\mathbf a^{(d) }_{i_m} 
	.
	$
	When 
	$  \mathbf a^{(1) }
	=  
	\mathbf a^{(2) }
	=   \dots
	= 
	\mathbf a^{(m) }
	=\mathbf a $ $(I_1 =  I_2  = \dots = I_m =I)$,  we use  the  notation 
	$ \mathcal A =  \mathbf a^{\circ m}$  for  simplicity,  where 
	$   \mathcal A $ is  a  symmetric  tensor  of  order  $m$  and  dimension  $ I$.  
\end{definition}

\begin{definition}[\textbf{direct  sum}]
	The  direct  sum  of   a   $ n \times n$  matrix  $\mathbf  {A}  $ and  a   $m \times m$ matrix   $\mathbf  {B} $  is a  matrix  with a  size  of  $(n+m) \times(n+m)$,  denoted    $\mathbf  A \oplus \mathbf  B$, which  follows: 
	
	\begin{equation}
	\mathbf  {A} \oplus \mathbf  {B}=
	\left[\begin{array}{cc}
	\mathbf {A} & \mathbf {O}_{n \times m} \\
	\mathbf {O}_{m \times n} & \mathbf {B}
	\end{array}\right] .
	\end{equation}
\end{definition}

\subsection{Optimization  theories of  Tensor eigenpairs}
In this part,  we briefly  introduce the  optimization theories   related to the tensor  eigenpairs problem. 
The  concept  of  tensor  eigenpairs  can be    understood  and  derived  by  considering  the  following   constrained  nonconvex optimization  model:
\begin{equation}\label{opti_ori}
\begin{cases}
\max\limits_{\mathbf v} \quad \mathcal S \mathbf v^{m}   \\
\rm s.t. \quad \mathbf v^{\mathrm {T}}\mathbf v=1
\end{cases}.
\end{equation}
The Lagrangian function 
of (\ref{opti_ori})  is  defined as:
\begin{equation}\label{Lagrangian_function}
L(\mathbf v, \lambda)=
\frac {1}  {\it m}
\mathcal S \mathbf v^{\it m}
+
\frac { \lambda} {2} (1-  \mathbf v^{\mathrm {T}}\mathbf v).
\end{equation}
When the gradient of
$  L(\mathbf v, \lambda) $ to
$ \mathbf  v $ is  $ \mathbf 0$,    the eigenpair of a  symmetric  tensor can  be  deduced,  which  was  independently    defined  by   Lim  and  Qi  in  2005:

\begin{definition} \cite{qi,lim} [\textbf{eigenpairs of  symmetric tensor}]
	Given a  tensor $\mathcal S   \in    T^{m}(\mathbb R^{n}) $,
	a pair
	$(\lambda ,\mathbf v )$
	is an  eigenpair  of  
	$\mathcal S  $ 
	if
	\begin{equation}\label{definition}
	\mathcal S \mathbf v^{m-1}=\lambda \mathbf v,
	\end{equation}
	where
$ \lambda  \in  \mathbb C $
is  the  eigenvalue and
$ \mathbf v  \in   \mathbb C^{n \times  1} $
is the  corresponding   eigenvector   satisifying 
$\mathbf v^{\mathrm {T}}\mathbf v=1 $.
\end{definition}

The  first-order    gradient  derivation     (\ref{definition})   can be  used  to    obtain        all   stationary  points  of  (\ref{opti_ori}), i.e., 
all eigenpairs of tensor.   
Note  that   not all of the  stationary  points are  the  locally  optimal  points. 
Some of these    can be  categorized as the saddle points.  
While   the  second-order  derivation  information  plays  an  important  role  in  identifying    whether  a  stationary  point  is     locally    optimal   
or saddle  given    an  optimization   model.      The    second-order derivation  
of
$  L(\mathbf v, \lambda) $ 
to    $ \mathbf v $, which is  also  termed  the Hessian matrix of
(\ref{Lagrangian_function}),  is   denoted
by 
\begin{equation}\label{hessian_matrix}
\mathbf H(\mathbf v) = (m-1)\mathcal S \mathbf v^{m-2} - \lambda \mathbf I_{n-1} ,
\end{equation}
where
$\mathcal S \mathbf v^{m-2} $ was defined in Subsection \ref{notation},
and 
$ \mathbf I_{n-1} $
is an $ (n-1) \times (n-1) $  identity matrix.

Then,  
the 
locally    optimal   solutions of   (\ref{opti_ori})
can  be  identified  by
checking the  negative 
definiteness  
and determining the sign of each  eigenvalue
of the  following  matrix:
	\begin{align}\label{Mhess}
\mathbf {K}
& =  \mathbf  P_{\mathbf  v}^{\bot} \mathbf H (\mathbf v)   \mathbf  P_{\mathbf  v}^{\bot}
	\end{align}
	and the  detailed  explanation for  the  
	derivation of 
	(\ref{Mhess})
	can refer to the Appendix part.

		\subsection{Regular  simplex  frame  and  tensor}
		In this part,  we  will  introduce  a  special class of symmetric   tensors,  which is termed   
		regular  simplex  one. 
		First,  the    definition of the  generalized  equiangular set  is  introduced as  follows:  
		\begin{definition}[Definition 4.1  in Ref. \cite{RobustEigen}]
		An equiangular set (ES) is a collection of vectors $\mathbf{w}_{1}, \ldots, \mathbf{w}_{r} \in \mathbb{R}^{n-1}$ with $r \geq n-1$ if there exists $\alpha \in \mathbb{R}$ such that
		$$
		\alpha=
		\left|\left\langle\mathbf{w}_{i}, \mathbf{w}_{j}\right\rangle\right|, \forall i \neq j \quad \text { and } \quad\left\|\mathbf{w}_{i}\right\|=1, \forall i .
		$$
		Furthermore, an $\mathrm{ES}$ is  called  an equiangular tight frame (ETF)  if  
		\begin{equation}
	\mathbf{W} \mathbf{W}^{\mathrm T}=a \mathbf{I}_{(n-1)}, \quad \mathbf{W}:=\left(\mathbf{w}_{1},  \cdots,  \mathbf{w}_{r}\right) \in \mathbb{R}^{(n-1) \times r}
	\end{equation}
	\end{definition}

	For  example, 
	when   $r=n-1$ and $a=1$, the  orthonormal bases
	 $\left\{\mathbf{w}_{1}, \ldots, \mathbf{w}_{n-1}\right\} \subset \mathbb{R}^{(n-1) \times (n-1)}$
	 forms  an  ETF,  where $\alpha =0$.
	 When    $r=n$ and $a=\frac {n}{n-1}$,   $\left\{\mathbf{w}_{1}, \ldots, \mathbf{w}_{n}\right\} \subset \mathbb{R}^{(n-1) \times  n}$, it  is   termed  
	 regular  simplex frames,  
	 where   $ 
	 \alpha = - \frac{1}{n-1}$.
	 $\mathbf{w}_{i}, i=1,2 \dots, n$   are  called  the  vectors  in  the  frame. 
	 In  2D  space,  
	 the  regular simplex  frame  is  a   regular  triangle.

	 In  addition, for the regular  simplex frame,  the  following property  holds,  which  will be  useful in  the  later  section: 
	 
	 \begin{property}\label{regularproperty}
	For  the  regular  simplex   frame,  the nullspace of 
	 $ \mathbf{W}^{\mathrm T} \mathbf{W} $
	  and 	 $  \mathbf{W} $
	   is spanned by 
	   $ \mathbf{1}_{n} $,
	   i.e., 
	    $ 
	    \mathbf{W}^{\mathrm T} \mathbf{W}  \mathbf{1}_{n} 
	    =
	     \mathbf{W}  \mathbf{1}_{n} 
	     =
	     \mathbf{0}_{n-1}
	    $.
	   	 \end{property}

		Then,  the  regular  simplex  tensor  is   one  deduced  by  the  the  regular  simplex   frame  with  the  following   form:
		\begin{equation}\label{simplextensor}
		\mathcal{S}:=\sum_{i=1}^{n} \mathbf{w}_{i}^{\circ m}
		\end{equation}
		where $\circ$ is the outer product  defined  in Definition \ref{outerprod}, 
and  	$\mathcal{S}
	$ is  a   symmetric  tensor of order
		$ m $
		and dimension
		$ n-1$.   
		Note that
		when the tensor is generated by the orthonormal bases
		$\left\{\mathbf{w}_{1}, \ldots, \mathbf{w}_{n-1}\right\} \subset \mathbb{R}^{(n-1) \times (n-1)}$,
		the corresponding tensor is called an orthogonally  decomposable (odeco).
		Clearly, the regular simplex tensor can be viewed as a generalization of the odeco tensors.

In  this paper, 
we will mainly focus on such a type of 	 
regular simplex tensor and tend to  analyze its eigen-stucture, including its stationary and locally optimal solutions. 
Before our discussion,	some necessary remarks   concerning  the discussed  scope    are  made as follows:
	\begin{remark}\label{RemarkScope}
		
		\begin{itemize}
			\item The notation  adopted   in this paper  is slightly  different from the previous work \cite{RobustEigen,teneigenstructure}.
			The existing ones considered  $n+1$ vertices in $n $ dimensional space, while  here we
			considered  $n$ vertices in $n-1 $ dimensional space. 
			Such a  modification is done for a convenient  formula  derivations  in the later  section (see Section  \ref{reformulated} for details), which is  trivial.
				\item 
				Throughout this paper,  we  only  concern with the case where 
				$n \ge 3, m \ge 3$  for (\ref{simplextensor}). 
				The matrix case (corresponding to 	$ m = 2$)  was not included in the discussion, since it can be checked that 
				when $m=2$, the second-order tensor for (\ref{simplextensor}) will be 
				proportional to an  identity matrix, up to a constant factor, whose  eigen-structure is  obvious.
				In addition,  it should also be noted that 
				the case  of 		$n = 3, m = 4$   turns to be  a very  special combination, 
				whose corresponding   objective  value, i.e., 
				$
			\mathcal S \mathbf v^{m}, 
			$   is a constant.
				Such a  conclusion has been analyzed in Theorem 4.6 of \cite{RobustEigen}  
				 and Theorem 12 of \cite{teneigenstructure}.
				Please refer for  details. 		
				In the later analysis, 
				this combination is also always excluded out of the discussed scope. 	
		\end{itemize}
	\end{remark}
		

\section{Problem  reformulation for   regular simplex  tensor eigenpairs}\label{reformulated}

In  this  part, we   first  transform  the  optimization  model     by  
reformulating  a  new  model  for  the  regular  simplex  tensor.
The  details are as  follows.

 Directly  focusing  on  the  original  optimization model   (\ref{opti_ori}) where  the  tensor  is  given  by  (\ref{simplextensor})
 may be  a  complicated   task.
 In  this  part,  by  utilizing  
 Property  \ref{regularproperty}, 
 an equivalent   optimization  model  is  reformulated  to   deal  with  the  issue. 
 First,  
 the  objective function   when $ \mathcal{S}$ is  given by  (\ref{simplextensor}) can be  rewritten as:
 	\begin{equation}
 \mathcal{S}  \mathbf v^{m}    
  =
 (\sum_{i=1}^{n} \mathbf{w}_{i}^{\circ m} )\mathbf v^{m} 
 =
  \sum_{i=1}^{n} ( \mathbf v^{\mathrm T} \mathbf{w}_{i})^{ m} .
 \end{equation}
 Denote  
 \begin{align}\label{udenote}
 \mathbf   u =
 \sqrt {
 	  \frac  {n-1}{ n}
  }   
 {\mathbf W}^{\mathrm T}  {\mathbf v}  = 
 [u_1,u_2,\dots, u_{n}]^{\mathrm T} \in  \mathbb {R}^{n \times 1} ,
 \end{align} 
 it holds that
 	\begin{equation}
 \mathcal{S}  \mathbf v^{m}    
\varpropto 
\sum\limits_{i=1}^{n}  u_{i}^{m}.
 \end{equation}
Furthermore, 
we  can  derive  that 
  \begin{align}\label{utu}
 \mathbf   u ^{\mathrm T} \mathbf   u
  =
   \frac  {n-1}{  n  }  
   {\mathbf v} ^{\mathrm T} {\mathbf W} {\mathbf W}^{\mathrm T}  {\mathbf v}  
   =
   {\mathbf v} ^{\mathrm T}  {\mathbf v}  
   =1 ,
 \end{align}
 and 
   \begin{align}\label{uln}
 \mathbf   u ^{\mathrm T}
  \mathbf 1_{n}
 =
 \sqrt {
	\frac  {n-1}{ n}
}   
 {\mathbf v} ^{\mathrm T} {\mathbf W}   {\mathbf 1_{n}}  
 = 
0 .
 \end{align}
 
Then,  model   (\ref{opti_ori})   for the regular simplex tensor, which is with the following form 
 \begin{align}\label{optmodelori}
\begin{cases}
\max\limits_{\mathbf v} \quad \mathcal S \mathbf v^{m} 
=
(\sum\limits_{i=1}^{n} \mathbf{w}_{i}^{\circ m}) \mathbf v^{m}   \\
\rm s.t. \quad \mathbf v^{\mathrm {T}}\mathbf v=1
\end{cases}
\end{align}
  can  be  equivalently    transformed into the following 
 model:  
 \begin{align}\label{optmodel}
 \begin{cases}
 \max\limits_{\mathbf u}  \quad  \sum\limits_{i=1}^{n}  u_{i}^{m} 
  \\
 \text { s.t. }   \quad    
 \mathbf   u^{\mathrm T}   \mathbf   u =1 ,
 \quad  
 \mathbf   u^{\mathrm T}   \mathbf   1_{n} =0
 \end{cases} .
 \end{align}
 
%
%
%

 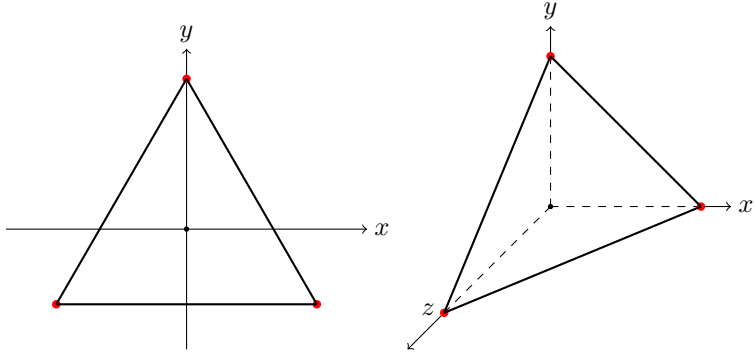
\begin{figure}
 	\centering
 	\begin{tikzpicture}[scale=2]
 \draw[->] (-1.2,0)  -- (1.2,0);
 \draw[->]  (0,-0.8)  -- (0,1.2);
 	\fill  (0:0)  circle(0.5pt);
 	
 	\node (P)  at (0:1.3) {$x$} ;
 	\node (P)  at (90: 1.3) {$y$} ;
 	
 	\fill[color=red]  (0,1)  circle(0.8pt);
 	\fill[color=red]  ( 1.7321/2,  -0.5)  circle(0.8pt);
 	\fill[color=red]  (-1.7321/2,  -0.5)    circle(0.8pt);

 	\draw[color=black,thick] (0,1) -- ( 1.7321/2,  -0.5) ; 
 	\draw[color=black,thick] (0,1)  -- ( -1.7321/2,  -0.5)  ;
 	\draw[color=black,thick] ( 1.7321/2,  -0.5)  --( -1.7321/2,  -0.5)  ;

 		
 	\end{tikzpicture}
 	 	\begin{tikzpicture}[scale=2]
 	\draw[->] (1,0)  -- (1.2,0);
 	\draw[->]  (0,1)  -- (0,1.2);
 	\draw[->]  (-0.7071   ,-0.7071  )  -- (-0.95  ,-0.95 );
 	
 	 	\draw[color=black,dashed] (0,0)  -- (1,0);
 	\draw[color=black,dashed]  (0,0)  -- (0,1);
 	\draw[color=black,dashed]  (0,0)  -- (-0.7071   ,-0.7071  );

 	\fill  (0:0)  circle(0.5pt);
 	
 	\node (P)  at (0:1.3) {$x$} ;
 	\node (P)  at (90: 1.3) {$y$} ;
 		\node (P)  at (220: 1.06) {$z$} ;
 	
 	\fill[color=red]  (0,1)  circle(0.8pt);
 	\fill[color=red]  ( 1,  0)  circle(0.8pt);
 	\fill[color=red]  (-0.7071 ,  -0.7071 )    circle(0.8pt);

 	\draw[color=black,thick] (0,1) -- ( 1,  0) ; 
 	\draw[color=black,thick] (0,1)  -- (-0.7071 ,  -0.7071 ) ;
 	\draw[color=black,thick] (-0.7071 ,  -0.7071 )    -- ( 1,  0)  ;


 	\end{tikzpicture}
 		\caption{
 		An intuitive  sketch map 
 	concerning the reformulation  from model (\ref{optmodelori})  to 
 	(\ref{optmodel})  for the case of $n=3$:
 transforming  the regular triangle  from   2D    to  3D  space.
 }
	
 	\label{simplextrans}
 \end{figure}

 For   simplicity,  in the later  analysis, denote  
\begin{align}
\notag
 & f (\mathbf u) = \sum_{i=1}^{n}  u_{i}^{m}, \\ 
&\nonumber   g_{1}(\mathbf u) =  \mathbf u^{\mathrm T}\mathbf u -1 = 0,  \\
&\nonumber  g_{2}(\mathbf u) = \mathbf u^{\mathrm T}\mathbf 1_{n}-0 = 0 .  
 \end{align}

 The  reformulated  model  can  be 
 understood as    a  transformation   that  transfers    the  original model  in $n-1$-dimensional  space  into 
 the $n$-dimensional one  for  analysis. 
And  the  new  constraint  
 $  \mathbf   u^{\mathrm T}   \mathbf   1_{n} =0 $
 is  naturally  related  to   Property  \ref{regularproperty}.
 	An intuitive  sketch map 
 for the case of $n=3$  is  plotted  in  Fig~\ref{simplextrans}.
 
 It should be emphasized that 
 there exists an one-to-one  correspondence relationship 
 between 
 the solutions of 
 (\ref{optmodelori})  and
 those of 
  (\ref{optmodel}).
 When 
 $ \mathbf v$ is a feasible solution of  (\ref{optmodelori}), 
 the corresponding $\mathbf u$ calculated by 
 (\ref{udenote}) 
 will also be a solution of (\ref{optmodel}), and vice verse. 
 Compared to the original  model (\ref{optmodelori}), 
   (\ref{optmodel}) 
   could be 
   with the  following advantage:
   the objective function $f (\mathbf u)$  is 
   separable regarding to the $n$ variables  $u_{i}$, 
  which will be beneficial  to  analyzing 
  the structure 
  for eigenpairs, and this  will be discussed in the   next  two  sections. 
 
 Then, in the  following Section  \ref{station} and \ref{locallymaximized},
 the structure for all  the  stationary  and  locally optimal  solutions of 
 the newly reformulated model 
 is investigated  to  sequentially proceed the analysis.

 \section{Structure  of  all  stationary points of  (\ref{optmodel})}\label{station}
The Lagrangian function  of the  reformulated  constrained  model in  (\ref{optmodel})    is 
defined as:
\begin{equation}\label{Lagrangianfunctionnew}
L(\mathbf u, \alpha, \beta)=
\frac{1}{m}
\sum\limits_{i=1}^{n}  u_{i}^{m}						
+
\frac{\alpha}{2}
(1- \mathbf   u^{\mathrm T}   \mathbf   u  )
+
\beta (0- \mathbf   u^{\mathrm T}   \mathbf   1_{n}),
\end{equation}
where $ \alpha,\beta$ are  the  corresponding   Lagrangian   multipliers of  the two  constraints.

To  obtain all the stationary points, one can  check  the first-order  necessary   condition
(also known as Karush-Kuhn-Tucker (KKT) condition)
of model  (\ref{optmodel}).
The gradient of
$  L(\mathbf u, \alpha, \beta) $    with  respect  to  
$ \mathbf u $   follows: 
\begin{equation}\label{gradient}
\triangledown_{\mathbf u}L =
\mathbf u ^{\circledast^{m-1}}
-
\alpha \mathbf u-\beta \mathbf  1_{n} . 
\end{equation}

 When
$ \triangledown_{\mathbf u}L = \mathbf 0$,  it  holds  that 
\begin{equation}
\notag
\mathbf u ^{\circledast^{m-1}}-\alpha \mathbf u-\beta \mathbf  1_{n} = \mathbf 0 . 
\end{equation}

To   conclude,  the  
stationary points of  (\ref{optmodel})
should  simultaneously 
satisfy 
  the  following  three  equations:  
\begin{align}
\label{KKTgradient}
&\mathbf u ^{\circledast^{m-1}}-\alpha \mathbf u-\beta \mathbf  1_{n} = \mathbf 0 . 
\\
\label{KKTcon1}
&\mathbf   u^{\mathrm T}   \mathbf   u =1 .
\\
\label{KKTcon2}
&\mathbf   u^{\mathrm T}   \mathbf   1_{n} =0.
\end{align}

By multiplying  $\mathbf   u^{\mathrm T}$ on  both sides of (\ref{KKTgradient}) and utilizing  (\ref{KKTcon1})  and
(\ref{KKTcon2}), 
we can  obtain  that 
\begin{align}\label{alphares}
\alpha 
=
\sum\limits_{i=1}^{n}  u_{i}^{m}
=
f(\mathbf u).	
\end{align}

Similarly, by multiplying  $  \mathbf   1_{n}^{\mathrm T}$ on  both sides of (\ref{KKTgradient}) and utilizing  (\ref{KKTcon1})  and
(\ref{KKTcon2}), 
 we can  obtain  that 
\begin{align}\label{beatres}
\beta 
=
\frac 1 n  \sum\limits_{i=1}^{n}  u_{i}^{m-1}.	
\end{align}

\begin{figure*}[t]
	\centering
	\begin{subfigure}[htbp!]{0.39\textwidth}
		\includegraphics[width=1\textwidth]{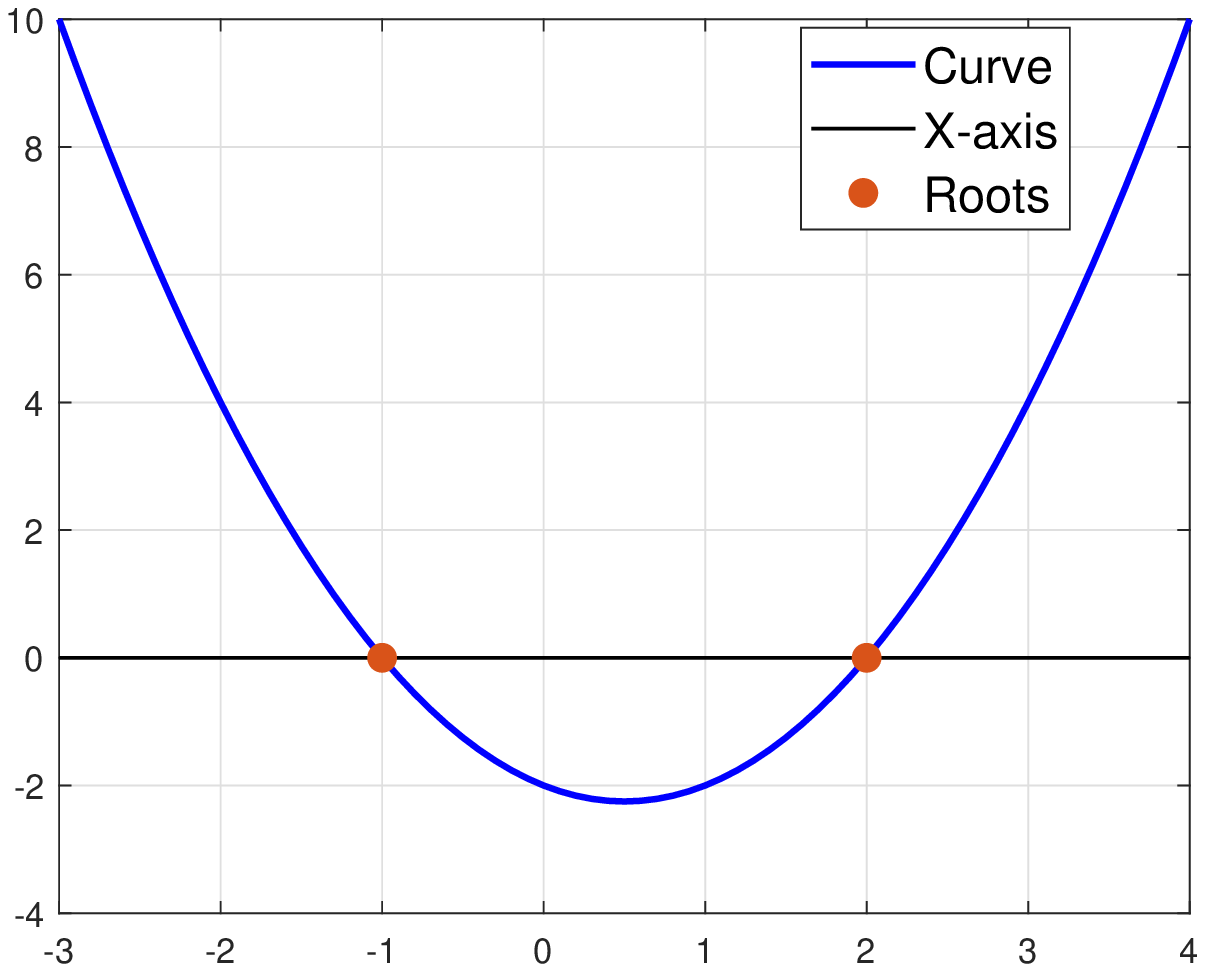}
		\subcaption{}
		\label{odd}
	\end{subfigure}
	\begin{subfigure}[htbp!]{0.39\textwidth}
		\includegraphics[width=1\textwidth]{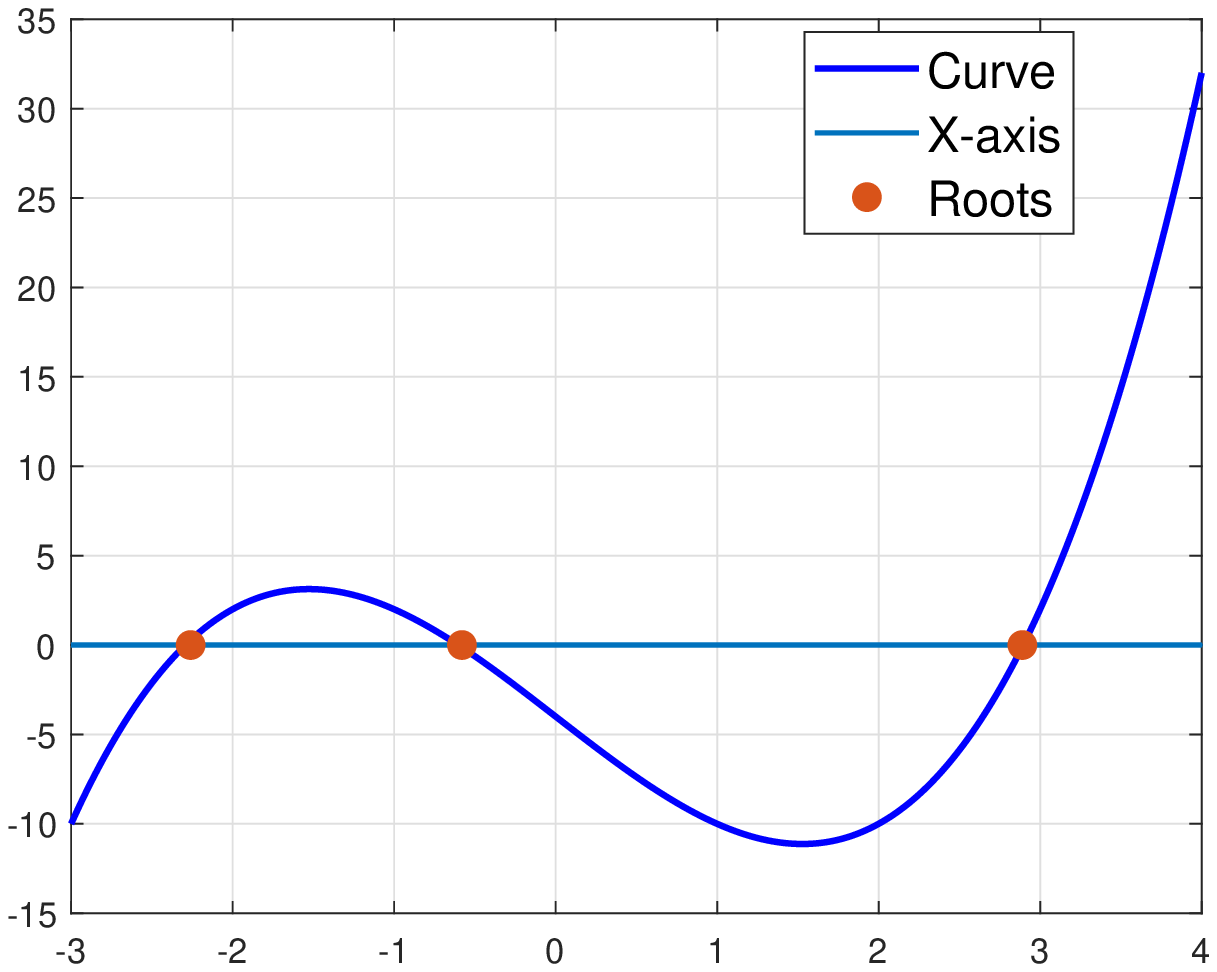}
		\subcaption{}
		\label{even}
	\end{subfigure}
	\caption{
		\quad 
		The  sketch figures   for the curve  of   the  function   $  f(x) = x^{m-1} -\alpha x - \beta$ for odd and even case:  a)   odd  $m$  case: $m=3, \alpha=1, \beta =2$;   b)   even   $m$  case: 
$m=4, \alpha=7, \beta =4$.	}
	\label{curveplot}
\end{figure*}

\begin{remark}
	Note that both 
	$\alpha$ and $\beta$ 
	are the function of $\mathbf u$. 
	For simplicity, we omit the variable  $\mathbf u$ in the notations.
We  discuss the sign of $\alpha $ and  $\beta $  for  different orders $m$. 
When  $m$  is odd, 
since $\alpha(-\mathbf u) = -\alpha(\mathbf u) $,  we can always select  the value where $\alpha >0$. 
For $\beta$ which  is the summation of  $n$ non-negative  terms,  it  holds that  $\beta \ge 0$. 
However, considering the  constraint  in  (\ref{KKTcon1})  and
(\ref{KKTcon2}), all $n$ terms $u_{i}$ cannot be equal to 0 simultaneously.
Therefore, we can  finally  conclude that 
 $\beta > 0$. 
A similar  analysis can be proceeded   for the even $m$  case,  and it  holds that  $\alpha >0$  and  $\beta \ge 0$.
\end{remark}

Then,  as can be  observed  from (\ref{KKTgradient}),  each $u_{i}$   can  be  one of  the roots  of  $(m-1)$-order polynomial 
$ u_{i}^{m-1} -\alpha u_{i} - \beta =0  $.
Note that  in  this  paper,  we  only  discussed  the  real  solutions, consistent with the previous work \cite{teneigenstructure}.
By plotting  the  function  curves 
with  the  form  of 
 $  f(x) = x^{m-1} -\alpha x - \beta$
   for  odd  and  even  two  cases,  
it can be observed  from  Fig 	\ref{curveplot} that 
when $m$ is odd,  there are  at most  two  real  roots  for  $ u_{i}^{m-1} -\alpha u_{i} - \beta =0  $, 
while  $m$ is even,  the number  turns  to be three ones. 
Therefore, throughout  the  following  contents,  we  will  separately discuss  the odd and  even   $m$ cases.
Note that  such  an  observation  and  classification  rule is  consistent with the  discussion  presented in 
the previous work \cite{teneigenstructure}.
The  differences  lie  in  that 
\cite{teneigenstructure}  focuses   on the original model (\ref{optmodelori}), while  
this paper analyzes the  reformulated one (\ref{optmodel}).

And the  following  Lemma  concludes
the structure and number  of all  the  stationary  solutions for model (\ref{optmodel}):
\begin{lemma}\label{Theorem_structureofall}
	
	i):
	when  $m$  is  odd, 
	all  the  stationary   points  of    model (\ref{optmodel}), 
	$	\mathbf   u =  
	[u_1,u_2,\dots, u_{n}]^{\mathrm T} \in  \mathbb {R}^{n \times 1}  $, 
	are  with   the  structure as  follows:
		\begin{equation}\label{u_classifyodd}
\mathbf u
=
\mathbf P
[
a \mathbf 1_{k}^{\mathrm T},
b \mathbf 1_{n-k}^{\mathrm T}
]^{\mathrm T}, 
\end{equation}
	where  
	$ a=   \sqrt{	\frac{n-k} {k n}		} >0 $  and  
	$  b=-  \sqrt{	\frac{k} {(n-k) n}	}   <0 $ vary with $k$, 
	where $1 \le  k  \le \lceil n/2  \rceil $, ($\lceil n/2  \rceil$ denotes the 
	integrate that is no larger than $ n/2 $)
	and
	$\mathbf P \in  \mathbb R^{n \times n} $
	is  a 
	permutation matirx. 
	
			ii):
		when  $m$  is  even, 
		all  the  stationary   points  of    model (\ref{optmodel}), 
		$	\mathbf   u =  
		[u_1,u_2,\dots, u_{n}]^{\mathrm T} \in  \mathbb {R}^{n \times 1}  $, 
		are  with   the  structure as  follows:
		\begin{equation}\label{u_classifyeven1}
		\mathbf u
		=
		\mathbf P
		[
		a \mathbf 1_{k}^{\mathrm T},
		b \mathbf 1_{n-k}^{\mathrm T}
		]^{\mathrm T}
		\end{equation}
		or 
		\begin{equation}\label{u_classifyeven2}
		\mathbf u
		=
			\mathbf P
			[
		c \mathbf 1_{p}^{\mathrm T},
		d \mathbf 1_{q}^{\mathrm T},
		e \mathbf 1_{s}^{\mathrm T}
		]^{\mathrm T}
		\end{equation}
			where  
		$ a=   \sqrt{	\frac{n-k} {k n}		} >0 $  and  
		$  b=-  \sqrt{	\frac{k} {(n-k) n}	}   <0 $ vary with $k$, 
			where $1 \le  k  \le \lceil n/2  \rceil$,  
		and
		$\mathbf P \in  \mathbb R^{n \times n}$
		is  a 
		permutation matirx;
			where $1 \le  p,q,s  \le n-1$ and    $p+q+s=n$.
		Note that there is no   explicit expression for the variables  $c,d,e$
		in (\ref{u_classifyeven2}).
		
			iii): when  $m \ge 3 $ is  odd and $n \ge 3$,  the number of all the stationary points of  model (\ref{optmodel}) is equal to  $	K_{m,n} = 2^{n-1}-1$.
			
		iv): when  $m \ge 3 $ is  even  and $n \ge 3$ ,  the number of all the stationary points of  model (\ref{optmodel}) is equal to 
		\begin{align}
		K_{m,n}=
	\begin{cases}
 \frac 
{ 3^{ n-1} - 1}
{2}  , \quad   n \ge  3, m\ge 4  \\
6  , \quad \quad \quad  n=3, m\ge 6
	\end{cases}.
		\end{align}
		
\end{lemma}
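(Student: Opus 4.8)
The plan is to organize the proof around the polynomial structure already exposed in the discussion preceding the lemma, and to exploit the separability of the reformulated objective. Recall that every stationary point $\mathbf u$ must satisfy the three equations (\ref{KKTgradient}), (\ref{KKTcon1}), (\ref{KKTcon2}), and that (\ref{KKTgradient}) forces each coordinate $u_i$ to be a root of the one-variable polynomial $t^{m-1} - \alpha t - \beta = 0$. First I would establish parts i) and ii), the structural descriptions. For odd $m$, I would invoke the curve analysis of Fig.~\ref{curveplot}: since $\alpha>0$ and $\beta>0$, the polynomial $t\mapsto t^{m-1}-\alpha t - \beta$ has at most two real roots, call them $a$ and $b$; hence each $u_i\in\{a,b\}$, and up to a permutation $\mathbf P$ we may write $\mathbf u = \mathbf P[a\mathbf 1_k^{\mathrm T}, b\mathbf 1_{n-k}^{\mathrm T}]^{\mathrm T}$ for some $0\le k\le n$. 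Feeding this into (\ref{KKTcon1}) and (\ref{KKTcon2}) gives $ka^2+(n-k)b^2=1$ and $ka+(n-k)b=0$; solving this $2\times 2$ system yields exactly the claimed closed forms $a=\sqrt{(n-k)/(kn)}$, $b=-\sqrt{k/((n-k)n)}$ (with the sign choice fixed by $\alpha>0$, which forces the value attained $k$ times to be the positive root). The cases $k=0$ and $k=n$ are impossible because they violate $\mathbf u^{\mathrm T}\mathbf 1_n=0$ with $\mathbf u\ne\mathbf 0$, and the bound $k\le\lceil n/2\rceil$ comes from the permutation-equivalence $k\leftrightarrow n-k$ (swapping $a$ and $b$ is a relabeling). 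For even $m$ the polynomial has at most three real roots, so each $u_i$ takes one of at most three values $c,d,e$; this gives either the two-value pattern (\ref{u_classifyeven1}) — whose constants are pinned down exactly as in the odd case — or the genuinely three-value pattern (\ref{u_classifyeven2}) with $p,q,s\ge 1$, $p+q+s=n$, for which the two scalar equations (\ref{KKTcon1})–(\ref{KKTcon2}) plus membership in the root set leave a one-parameter family, so no closed form is available; I would just note the constraints $c,d,e$ must satisfy and move on.

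Next I would count the stationary points, parts iii) and iv). For odd $m$ (part iii): a stationary point is determined by choosing which coordinates equal $a$ versus $b$, i.e.\ by a subset $S\subseteq\{1,\dots,n\}$ of size $k$ with $1\le k\le n-1$; but as noted, $S$ and its complement give the \emph{same} point because then $a,b$ are forced to swap along with the partition — so distinct points correspond to unordered partitions $\{S,S^c\}$ into two nonempty blocks, of which there are $(2^n-2)/2 = 2^{n-1}-1$. I should double-check that different $k$ (and different $S$ of the same size) really do give distinct vectors: this follows because $a$ and $b$ are strictly monotone functions of $k$ on the relevant range and the multiset of entries determines $k$, while the positions determine $S$ up to complementation. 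Hence $K_{m,n}=2^{n-1}-1$. For even $m$ (part iv, generic case $n\ge 3$, $m\ge 4$): now each stationary point is an \emph{ordered} partition of $\{1,\dots,n\}$ into up to three labeled blocks (labeled by which root $c,d,e$ a coordinate equals), and the number of such surjections onto a set of at most three labels, modulo the symmetry that permutes the three root-values consistently with permuting blocks, should come out to $(3^n - \text{(lower-order terms)})/2$. Concretely: partitions into exactly two blocks contribute as in the odd case, partitions into exactly three blocks contribute the rest; summing and dividing by the relevant symmetry factor yields $(3^{n-1}-1)/2$. The special subcase $n=3$, $m\ge 6$ must be handled separately because the three-value pattern degenerates (for $n=3$ the only three-block partition is $p=q=s=1$), and one must carefully enumerate the remaining configurations, using Remark~\ref{RemarkScope} to exclude $n=3,m=4$; the count is then $6$. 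I would present this via the formula $K_{m,n} = \binom{n}{2}$-type terms for two-valued points plus the enumeration of genuine three-valued points, and verify the arithmetic reduces to $(3^{n-1}-1)/2$ for $n\ge 4$ and to $6$ when $n=3$.

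The main obstacle, and the step I would spend the most care on, is the \emph{counting modulo symmetry} — in particular making precise why a partition and a "relabeling of the root values" produce the same stationary point rather than two different ones, and ensuring no double-counting or under-counting when blocks have equal sizes or when some of $a,b,c,d,e$ could coincide for special $(m,n)$. For odd $m$ the symmetry group is just $\mathbb Z/2$ (swap the two roots) and the argument is clean once one checks $a\ne b$ always; for even $m$ one must be careful that the relevant identification is exactly the one induced by the root-to-value assignment, and that the three real roots are genuinely distinct in the generic range (this is where $n=3,m=4$ fails and gets excluded). A secondary technical point is justifying that every choice of subset/partition actually \emph{yields} a valid stationary point — i.e.\ that the constants produced by solving (\ref{KKTcon1})–(\ref{KKTcon2}) do lie in the root set of $t^{m-1}-\alpha t-\beta$ with the correct $\alpha=f(\mathbf u)$, $\beta=\frac1n\sum u_i^{m-1}$ from (\ref{alphares})–(\ref{beatres}) — so that the correspondence between stationary points and (equivalence classes of) partitions is a genuine bijection and not merely an injection in one direction. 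For the two-valued patterns this is a direct substitution; for the three-valued even case it requires showing the nonlinear system is consistent, which I would argue by a dimension/continuity argument rather than explicit solution.
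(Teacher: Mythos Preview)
Your approach is essentially the same as the paper's: reduce (\ref{KKTgradient}) to a one-variable polynomial whose real roots bound the number of distinct coordinate values, impose (\ref{KKTcon1})--(\ref{KKTcon2}) to solve for $a,b$ explicitly in the two-value case, and count stationary points as assignments of coordinates to root-values modulo the relabeling symmetry. The paper's proof is terser and does not explicitly verify the two points you flag (distinctness of solutions across different $k$, and the converse direction that every admissible partition actually produces a stationary point), so your extra care there is appropriate but does not change the strategy.

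One small correction: for odd $m$, the identification $k\leftrightarrow n-k$ is not literally ``swapping $a$ and $b$ is a relabeling'' of the \emph{same} vector; rather, the $(n-k)$-solution equals $-\mathbf u$ up to permutation (since $a_{n-k}=-b_k$ and $b_{n-k}=-a_k$), and it is the odd-order convention $f(-\mathbf u)=-f(\mathbf u)$ that lets you retain only one of each $\pm$-pair. The paper makes this explicit; you should too, as otherwise the halving in $(2^n-2)/2$ is not justified.
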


\begin{proof}
	Equivalently,  for (\ref{KKTgradient}), there   are   $n$  equations  with  the  form of 
\begin{equation}\label{uim1alpha} 
u_{i}^{m-1} -\alpha u_{i} - \beta =0  \quad  (i=1,2, \dots, n).
\end{equation}


In term of (i), 
when $m$ is  odd,  there  are  two  real  roots for 
(\ref{uim1alpha}), 
  denoted by $a,b$ for  simplicity. 
It can be  checked that 
all  $n$ variable 
$u_{i}$ cannot  simultaneously 
be  $a$  or  $b$. 
Otherwise, we  will 
 derive    $ u_{1} =u_{2} = \dots = u_{n}$, which  is contradicted   with  the  constraints  in  (\ref{optmodel}).

Therefore, there  are  at  most  $n-1$ 
variable  
$u_{i}$ 
can be the  same  one. 
For  example, 
assume  that  the  first  $k$ ($ 1 \le  k  \le  n-1 $)  
variables  are the  same, 
and 
the last   $n-k$ fractions  are the other one. 
The  form of the  solution  is  given by 
\begin{align}\label{ab_struc}
& u_{1}=u_{2} \cdots=u_{k}  =a  ,  
&u_{k+1}=u_{k+2}  \cdots=u_{n} =b.  
\end{align}



Considering  the  two constraints  in  (\ref{optmodel}), we  have  that 
\begin{equation}\label{ab_expreess}
\begin{cases}
k a^{2}+(n-k) b^{2}=1 \\
k a+(n-k) b=0
\end{cases}. 
\end{equation}
Then, it can be derived that 
\begin{equation}\label{ab_solu}
\begin{cases}
a=   \sqrt{	\frac{n-k} {k n}		} \\
b=-  \sqrt{	\frac{k} {(n-k) n}	}  
\end{cases},
\quad  or  \quad 
\begin{cases}
a=   - \sqrt{	\frac{n-k} {k n}		} \\
b=  \sqrt{	\frac{k} {(n-k) n}	}  
\end{cases},
\end{equation}
where  
$ a, b  $ vary with $k$. 
Furthermore, note  that   because  
 $ f ( -\mathbf  u ) = -  f (\mathbf  u) $  when  $m$ is odd,
we thus can   only reserve   the former  one solution   in (\ref{ab_solu})  where  $ a>0$ .
Then,  the  structure of  all   solutions
can be  denoted  by 
permuting the  indices  of  $i$  by  a  permutation matrix $\mathbf P$, which then  
can be  expressed as  the form in 
(\ref{u_classifyodd}).

In term of (ii), 
when $m$ is  even, 
there  are  at  most  three  real  roots for 
(\ref{uim1alpha}).
Similarly,  all $n$  variables  cannot be the same one, indicating  that there  are  at  least  two  different  values  concerning  the components of $\mathbf  u$.
Then, the  analysis can be similarly  proceeded  as  that  in (i)
and  the structure  of  
the solution will be  given by 
(\ref{u_classifyeven1})
and 
(\ref{u_classifyeven2}).
Note that 
for  (\ref{u_classifyeven1}), 
the value of $a$ and $b$ can be the same as the odd $m$ cases.
While 
for 
(\ref{u_classifyeven2}), 
since we have  three  unknown  variables $c,d,e$
but only  two  constraints, we cannot 
obtain the  explicit  expression  for $c,d,e$.

In term of (iii),   
since  each $u_{i}$ can be  selected  as  one  of  two real  roots,
the total combination numbers   are  given by 
$2^{n}$.
First,  
the  case that  
all $n$   
$u_{i}$ are the same one should be  excluded, as analyzed before. 
In addition, 
in the left 
$2^{n}-2$
combinations, 
it contains  repeated  same solutions, for  example, 
$ [
a \mathbf 1_{k}^{\mathrm T},
b \mathbf 1_{n-k}^{\mathrm T}
]^{\mathrm T} $
and 
$ [
b \mathbf 1_{k}^{\mathrm T},
a \mathbf 1_{n-k}^{\mathrm T}
]^{\mathrm T} $
can be treated as the same one solution. 
This is the reason that we constraint $1 \le  k  \le \lceil n/2  \rceil$,
since the solutions
where $\lceil n/2  \rceil  \le  k  \le  n-1$
can be considered as the same one. 
Therefore, the   total  number of 
feasible solutions 
is  given by 
$
\frac{2^{n}-2}{2}
=
2^{n-1}-1$.

In term of (iv),   for the  even $m$  case,  
it can be analyzed in a  similar way to that in (iii).
Since  one special 
case 	 of 		$n = 3, m = 4$ 
is excluded out of the scope,  as 
explained in Remark \ref{RemarkScope},
the number of all  solutions 
is separately discussed    from two different cases. 
In the first case where 
$ n=3, m\ge 6 $, 
it is easy to conclude that 
there are totally 6  solutions, 
counting three ones for both 
(\ref{u_classifyeven1})
and 
(\ref{u_classifyeven2}).
In the second  case where 
$  n \ge  3, m\ge 4 $, 
similar to the analysis for the odd $m$ case, 
 the  number  of 
all stationary  solutions 
is   
eventually 
given by 
$ \frac 
{ 3^{ n-1} - 1}
{2}  $.
And the proof is complete.
\end{proof}

\begin{remark}
	Note that in 
	(\ref{u_classifyeven1})
	and 
	(\ref{u_classifyeven2}),
	we use five notations  from 
	$a$ to $e$ to distinguish 
	the  two  different   structures   of  the solutions for the even $m$ cases. 
	We wish that  such a  distinction 
	will not cause ambiguity 
	concerning the fact that there are at most three different  real roots for 
		$  u_{i}^{m-1} -\alpha u_{i} - \beta =0  \quad  (i=1,2, \dots, n) $
		when $m$ is even. 
\end{remark}

After  having  investigated the structure of all  stationary  solutions, 
the  following lemma can be  established to 
reveal  the  relationship 
between the vector in the frame (which is the generators of the corresponding regular simplex tensor) and 
all stationary solutions: 
\begin{lemma}\label{vectorregularsimplexframe}
	The  vectors   in the regular simplex  frame, i.e., 
	 $ \mathbf{w}_{1}, \ldots, \mathbf{w}_{n}$,
	 correspond  to 
	 eigenpairs when $k=1$  in both   
	 (\ref{u_classifyodd})  and (\ref{u_classifyeven1}). 
	\end{lemma}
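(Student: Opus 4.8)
The plan is to pin down, for each frame vector $\mathbf w_{j}$, the point of the reformulated model (\ref{optmodel}) that corresponds to it under the substitution (\ref{udenote}), to show that this point is exactly the $k=1$ member of the family (\ref{u_classifyodd}) (respectively (\ref{u_classifyeven1})), and to conclude that $\mathbf w_{j}$ is an eigenvector either by invoking Lemma~\ref{Theorem_structureofall} or --- more transparently --- by a one-line evaluation of the eigenvalue equation. Fix $j\in\{1,\dots,n\}$. Using $\mathcal S\mathbf v^{m-1}=\sum_{i=1}^{n}(\mathbf w_{i}^{\mathrm T}\mathbf v)^{m-1}\mathbf w_{i}$, the equiangularity of the regular simplex frame ($\mathbf w_{j}^{\mathrm T}\mathbf w_{j}=1$ and $\mathbf w_{i}^{\mathrm T}\mathbf w_{j}=-\frac{1}{n-1}$ for $i\neq j$), together with $\sum_{i=1}^{n}\mathbf w_{i}=\mathbf W\mathbf 1_{n}=\mathbf 0_{n-1}$ from Property~\ref{regularproperty} (so that $\sum_{i\neq j}\mathbf w_{i}=-\mathbf w_{j}$), one obtains
\[ \mathcal S\mathbf w_{j}^{m-1}=\mathbf w_{j}+\left(-\frac{1}{n-1}\right)^{m-1}\sum_{i\neq j}\mathbf w_{i}=\left(1-\left(-\frac{1}{n-1}\right)^{m-1}\right)\mathbf w_{j}, \]
and since $\|\mathbf w_{j}\|=1$, the pair $\bigl(\,1-(-\frac{1}{n-1})^{m-1},\,\mathbf w_{j}\bigr)$ is an eigenpair of $\mathcal S$.

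Next I would translate $\mathbf w_{j}$ into the reformulated coordinates. Since $\|\mathbf w_{j}\|=1$, $\mathbf w_{j}$ is feasible for (\ref{optmodelori}), so by the one-to-one correspondence established in Section~\ref{reformulated} the vector $\mathbf u=\sqrt{\frac{n-1}{n}}\,\mathbf W^{\mathrm T}\mathbf w_{j}$ is a feasible --- indeed stationary --- point of (\ref{optmodel}). Its entries are $u_{i}=\sqrt{\frac{n-1}{n}}\,\mathbf w_{i}^{\mathrm T}\mathbf w_{j}$, whence $u_{j}=\sqrt{\frac{n-1}{n}}$ and $u_{i}=-\frac{1}{\sqrt{n(n-1)}}$ for all $i\neq j$. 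Comparing with the $k=1$ instance of (\ref{u_classifyodd}) (equivalently (\ref{u_classifyeven1})), where $a=\sqrt{\frac{n-1}{n}}$ and $b=-\sqrt{\frac{1}{(n-1)n}}$, this $\mathbf u$ is exactly $\mathbf P\,[\,a,\;b\,\mathbf 1_{n-1}^{\mathrm T}\,]^{\mathrm T}$ with $\mathbf P$ the permutation placing the lone entry $a$ in slot $j$; equivalently, by Lemma~\ref{Theorem_structureofall} it lies in the stated $k=1$ family. Combined with the previous paragraph, this is precisely the claim of the lemma.

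I do not anticipate a genuine obstacle. Two small points need care. First, one must use the specific value $-\frac{1}{n-1}$ of the common inner product of a \emph{regular simplex} frame (not merely its modulus), since that is what fixes $b$; this is the value recorded just before Property~\ref{regularproperty}, and it is also where the reduction $\sum_{i\neq j}\mathbf w_{i}=-\mathbf w_{j}$ (via Property~\ref{regularproperty}) enters. Second, $\mathbf w_{j}$ produces only two distinct component values in $\mathbf u$, and $a\neq b$, so the $n-1$ equal entries neither coincide with $a$ nor split further; in particular, for even $m$ the point belongs to the two-valued family (\ref{u_classifyeven1}) rather than the three-valued family (\ref{u_classifyeven2}), consistent with the statement.
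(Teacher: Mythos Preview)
Your proposal is correct and follows essentially the same route as the paper: both compute $\mathbf u=\sqrt{\tfrac{n-1}{n}}\,\mathbf W^{\mathrm T}\mathbf w_{j}$, read off the one entry $\sqrt{\tfrac{n-1}{n}}$ and the $n-1$ entries $-\tfrac{1}{\sqrt{n(n-1)}}$, and identify this with the $k=1$ values $a,b$ in (\ref{u_classifyodd})/(\ref{u_classifyeven1}). The one difference is that you additionally verify $\mathcal S\mathbf w_{j}^{m-1}=\bigl(1-(-\tfrac{1}{n-1})^{m-1}\bigr)\mathbf w_{j}$ directly from the frame relations, whereas the paper simply appeals to the equivalence of (\ref{optmodelori}) and (\ref{optmodel}) together with Lemma~\ref{Theorem_structureofall}; your extra line makes the eigenpair claim self-contained and even yields the eigenvalue explicitly, at no real cost.
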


\begin{proof}	
	Concerning  the   vectors   in the regular simplex  frame
 $
 \mathbf W =
 [
 \mathbf{w}_{1}, \ldots, \mathbf{w}_{n}
 ] \in
  \mathbb{R}^{(n-1) \times  n}$, 
  it holds that 
 $ \mathbf{w}_{i}^{\mathrm T} \mathbf{w}_{i} =1, 
  \mathbf{w}_{i}^{\mathrm T} \mathbf{w}_{j} = \alpha = -\frac{1}{n-1} $
  for 
  $ i \neq j$. 
  It can be calculated that  when 
  $ \mathbf v = \mathbf w_{j}$,
  it holds that 
 \begin{align}\label{udenotealpha}
\mathbf   u 
& 
\nonumber =
\sqrt {
	\frac  {n-1}{ n}
}   
{\mathbf W}^{\mathrm T}  {\mathbf w_{j}}  
= 
[u_1,u_2,\dots, u_{n}]^{\mathrm T} 
\\
&=
\sqrt {
	\frac  {n-1}{ n}
} 
[\alpha,  \dots,   \alpha, 
\underbrace{ 1 }_{j}, \alpha,  \dots,  \alpha ]^{\mathrm T} 
\in  \mathbb {R}^{n \times 1} ,
\end{align} 
It can be checked that 
$\mathbf u$ with the above form satisfies the two constraints  in (\ref{optmodel}).
Clearly,  
when $ \mathbf v = \mathbf w_{j}$,
$\mathbf u$  in   (\ref{udenotealpha}) 
correspond to the solutions  of   $k=1$ of 
(\ref{u_classifyodd})  and (\ref{u_classifyeven1}), and 
it holds for both odd  and even cases.
By  the equivalence  between  (\ref{optmodelori}) and  (\ref{optmodel}),
all  the 	  vectors   in the regular simplex  frame, i.e., 
$ \mathbf{w}_{1}, \ldots, \mathbf{w}_{n}$,
correspond  to 
$n$
eigenpairs  of the regular simplex  tensor. 
\end{proof} 

 \section{
 Structure   of  all  locally optimal  points of  (\ref{optmodel})
 }
 \label{locallymaximized}


We have investigated the  structure of all  stationary points of  (\ref{optmodel})  in the above section,
and also shown that 
the vectors in the regular simplex frame 
correspond to 
the solution  of (\ref{optmodel}) when $k=1$  in (\ref{u_classifyodd})  and (\ref{u_classifyeven1}). 
In this section,  we are further interested in identifying  the local optimality of each eigenpair, 
i.e., 
which    are   the  locally  maximized,  minimized or saddle    ones  of (\ref{optmodel})
among    these  solutions.
In other words, we tend to   group 
all these eigenpairs into three classes. 
For this purpose,    it is necessary to   introduce the second-order
derivative information 
of  the  Lagrangian function.      
For  (\ref{Lagrangianfunctionnew}),   the second-order derivation  to    $ \mathbf u $, termed  the Hessian matrix,  is   denoted
\begin{equation}\label{Hessianmatrix}
\triangledown_{\mathbf u\mathbf u}^{2} L
= \mathbf H (\mathbf u)  
=(m-1) diag (\mathbf u ^{\circledast^{m-2}})
-\alpha \mathbf  I_{n}  .
\end{equation}
Similar to 
the  derivation for (\ref{Mhess}),
the 
locally    optimal   solutions of   (\ref{optmodel})
can  be  identified  by
checking the  negative 
definiteness of the  following  matrix:
\begin{equation}\label{Mhess2} 
\mathbf {M} =
(\mathbf  P_{\mathbf  A } ^{\bot})^{\mathrm T}    \mathbf H (\mathbf u)  \mathbf  P_{\mathbf  A }^{\bot}
=
\mathbf  P_{\mathbf  A } ^{\bot}   \mathbf H (\mathbf u)  \mathbf  P_{\mathbf  A }^{\bot}
,
\end{equation} 
where 
$ \mathbf A $
is determined by
the 
two constraints 
$  g_{1}(\mathbf u) $ and  $ g_{2}(\mathbf u) $,
which 
is with the  following form:
\begin{equation}
\label{Aform}
\mathbf A =  [\triangledown  g_{1}(\mathbf u) , \triangledown  g_{2}(\mathbf u) ] =
[\mathbf u, \mathbf 1_{n}]  \in   \mathbb R^{n  \times 2 } .
\end{equation}
And the  detailed  explanation for  the  
derivation of 
(\ref{Mhess2})
are also  presented in  the Appendix part. 



Similarly,
we will separately discuss the cases of odd and even order.
Meanwhile, 
as there are three structure as presented 
from  
	(\ref{u_classifyodd})
to
(\ref{u_classifyeven2}),
these   three different cases are 
separately analyzed in the following subsections.
The details are as follows.

 \subsection{Odd  $m$ case}
 
 In this subsection, we first 
 focus on the tensor with  odd  $m$,
 whose solutions 
 are as shown in 
 (\ref{u_classifyodd}). 
\begin{lemma}\label{Theorem_structureoflocal}
	For the case where   $m$ is odd,  concerning  the solutions 
with the form of 
	(\ref{u_classifyodd}):
	i):
	 when  $k=1$ 
	 the solutions  are   the  locally maximized    points  of    model (\ref{optmodel});
	 ii)  when  $2 \le  k \le \lceil n/2  \rceil $, all the other solutions are  the saddle points  model (\ref{optmodel}). 
	
%
	
\end{lemma}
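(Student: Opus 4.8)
The goal is to analyze the definiteness of the reduced Hessian $\mathbf{M} = \mathbf{P}_{\mathbf{A}}^{\bot}\mathbf{H}(\mathbf{u})\mathbf{P}_{\mathbf{A}}^{\bot}$ at each stationary point of the form $\mathbf{u} = \mathbf{P}[a\mathbf{1}_k^{\mathrm T}, b\mathbf{1}_{n-k}^{\mathrm T}]^{\mathrm T}$, for odd $m$. Since $\mathbf{M}$ is conjugated by a permutation matrix $\mathbf{P}$ when we permute $\mathbf{u}$, and permutation conjugation preserves the spectrum, I would assume WLOG that $\mathbf{P} = \mathbf{I}_n$, so $u_1 = \cdots = u_k = a > 0$ and $u_{k+1} = \cdots = u_n = b < 0$. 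First I would write out $\mathbf{H}(\mathbf{u}) = (m-1)\,\mathrm{diag}(\mathbf{u}^{\circledast^{m-2}}) - \alpha\mathbf{I}_n$ explicitly: since $m$ is odd, $m-2$ is odd, so $a^{m-2} > 0$ and $b^{m-2} < 0$, giving a block-diagonal matrix with two constant diagonal blocks, $(m-1)a^{m-2} - \alpha$ repeated $k$ times and $(m-1)b^{m-2} - \alpha$ repeated $n-k$ times. Using $\alpha = f(\mathbf{u}) = ka^m + (n-k)b^m$ and the relations $ka^2 + (n-k)b^2 = 1$, $ka + (n-k)b = 0$ from (\ref{ab_expreess}), together with the KKT scalar equation $a^{m-1} - \alpha a - \beta = 0$ (and likewise for $b$), I would simplify these two diagonal values into closed form in terms of $n$ and $k$.

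The heart of the argument is then to compute the quadratic form $\mathbf{z}^{\mathrm T}\mathbf{H}(\mathbf{u})\mathbf{z}$ restricted to the tangent space $\mathbf{z} \in \mathrm{Null}(\mathbf{A}^{\mathrm T}) = \{\mathbf{z} : \mathbf{z}^{\mathrm T}\mathbf{u} = 0,\ \mathbf{z}^{\mathrm T}\mathbf{1}_n = 0\}$, which is $(n-2)$-dimensional. Because $\mathbf{H}$ is constant on each of the two index blocks, I would decompose a test vector $\mathbf{z}$ into its "within-block" part (vectors supported on the first $k$ or the last $n-k$ coordinates that sum to zero on that block) and its "between-block" part (constant on each block). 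For the within-block directions, the tangent constraints are automatically satisfied, so $\mathbf{z}^{\mathrm T}\mathbf{H}\mathbf{z}$ equals $[(m-1)a^{m-2}-\alpha]\|\mathbf{z}\|^2$ or $[(m-1)b^{m-2}-\alpha]\|\mathbf{z}\|^2$; hence a necessary condition for a local max is that both diagonal values be $\le 0$ (and for a genuine saddle it suffices that one within-block direction exists, i.e. $k \ge 2$ or $n-k \ge 2$, with a diagonal value of the wrong sign). For $k=1$ there are no within-block directions in the first block, so one must also handle the single remaining between-block direction and the within-block directions of the second block; here I would plug in $a,b$ from (\ref{ab_solu}) with $k=1$ and verify the $2\times 2$ (or scalar) reduced form is negative definite, using the excluded special cases from Remark~\ref{RemarkScope} where needed.

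So the two parts split as follows. For (ii), $2 \le k \le \lceil n/2\rceil$: I would show the first diagonal value $(m-1)a^{m-2} - \alpha$ and the second $(m-1)b^{m-2} - \alpha$ have opposite signs, or at least that at least one within-block direction yields a strictly positive value of the quadratic form while another direction (e.g. a within-block direction of the other block, which exists since $n - k \ge k \ge 2$) yields a strictly negative value — this exhibits the indefiniteness and proves the saddle claim. For (i), $k = 1$: I would verify that $(m-1)b^{m-2} - \alpha < 0$ (so the $n-2$ within-block directions of the second block are directions of strict decrease) and that the one leftover between-block tangent direction also gives a negative value, concluding $\mathbf{M} \prec 0$, i.e. a strict local maximum.

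The main obstacle I anticipate is the sign bookkeeping for the two diagonal entries after substituting the explicit $a, b$: one has to be careful that $a^{m-2}$ and $b^{m-2}$ scale like $k^{-(m-2)/2}$ and $-(n-k)^{-(m-2)/2}$ up to positive constants, so the two diagonal values are genuinely of different orders in $k$ and $n-k$, and pinning down that $(m-1)a^{m-2} - \alpha$ becomes positive precisely when $k \ge 2$ (while staying negative when $k=1$) is where the $n \ge 3$, $m \ge 3$ hypotheses and the exclusion of $(n,m)=(3,4)$ must be used. A secondary subtlety is making rigorous the "WLOG $\mathbf{P} = \mathbf{I}$" reduction and the orthogonal-complement / tangent-space decomposition, but those are routine once the block structure of $\mathbf{H}$ is in hand.
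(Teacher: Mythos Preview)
Your approach---exploit the block structure of $\mathbf H(\mathbf u)$ and decompose the tangent space into within-block pieces---is exactly the paper's. But there is a concrete misconception in your sign analysis that would derail the execution. You anticipate that $\sigma_a:=(m-1)a^{m-2}-\alpha$ is negative for $k=1$ and becomes positive for $k\ge 2$; this is false. For odd $m$ one has $\sigma_a>0$ for \emph{every} $k$, including $k=1$: indeed $\sigma_a$ is precisely $p'(a)$ for the polynomial $p(x)=x^{m-1}-\alpha x-\beta$, and since $m-1$ is even, $p$ has a single local minimum, so its derivative is positive at the larger root $a$ and negative at the smaller root $b$. Hence $\sigma_a>0$ and $\sigma_b<0$ always. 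The reason $k=1$ yields a local maximum is not that $\sigma_a$ changes sign, but that $\sigma_a$ occurs as an eigenvalue of $\mathbf M$ with multiplicity exactly $k-1$, which vanishes when $k=1$.

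Relatedly, the ``single remaining between-block direction'' you plan to handle does not exist. The within-block directions of the two blocks have dimensions $k-1$ and $n-k-1$ respectively; they are mutually orthogonal and together already span the full $(n-2)$-dimensional tangent space $\{\mathbf z:\mathbf z^{\mathrm T}\mathbf u=\mathbf z^{\mathrm T}\mathbf 1_n=0\}$. (Any vector constant on each block and orthogonal to $\mathbf 1_n$ is a scalar multiple of $\mathbf u$ itself, hence not in the tangent space.) So the restricted Hessian has spectrum $\{\sigma_a\ (\text{mult.\ }k-1),\ \sigma_b\ (\text{mult.\ }n-k-1)\}$, and the dichotomy between $k=1$ (all nonzero eigenvalues equal $\sigma_b<0$) and $k\ge 2$ (both signs present, since $n-k-1\ge 1$) is immediate. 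Finally, the exclusion of $(n,m)=(3,4)$ plays no role here; that case is relevant only for even $m$.
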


\begin{proof}
	Due to the form of $\mathbf  A$ as shown in (\ref{Aform}), 
$ \mathbf  P_{\mathbf  A }^{\bot} $ can be   further  rewritten as   
\begin{align}\label{projecteddenote}
\mathbf  P_{\mathbf  A } ^{\bot}
\nonumber
&=
\mathbf  I_{n}  -
\begin{bmatrix}
\mathbf u  &
\mathbf 1_{n}
\end{bmatrix}
(
\begin{bmatrix}
\mathbf u^{\mathrm T} \\
\mathbf 1_{n}^{\mathrm T}
\end{bmatrix}
\begin{bmatrix}
\mathbf u  &
\mathbf 1_{n}
\end{bmatrix}
)
^{-1}
\begin{bmatrix}
\mathbf u^{\mathrm T} \\
\mathbf 1_{n}^{\mathrm T}
\end{bmatrix}
\\ \nonumber
&=
\mathbf  I_{n}  -
\begin{bmatrix}
\mathbf u  &
\mathbf 1_{n}
\end{bmatrix}
(
\begin{bmatrix}
\mathbf u^{\mathrm T}\mathbf u &  \mathbf u^{\mathrm T}\mathbf 1_{n}   \\
\mathbf 1_{n} ^{\mathrm T} \mathbf u& \mathbf 1_{n}^{\mathrm T}\mathbf 1_{n}  \\
\end{bmatrix}
)
^{-1}
\begin{bmatrix}
\mathbf u^{\mathrm T} \\
\mathbf 1_{n}^{\mathrm T}
\end{bmatrix}
\\ 
&=
\mathbf  I_{n}  - 
(\mathbf u\mathbf u^{\mathrm T}  
+ \frac  1n \mathbf 1_{n}\mathbf 1_{n}^{\mathrm T} ). 
\end{align}
where we use the equation   $ \mathbf u^{\mathrm T}\mathbf 1_{n} = \mathbf 1_{n} ^{\mathrm T} \mathbf u =  0 $. 

In addition,   
by  considering  the  structure of   $\mathbf u$ as  shown in  (\ref{u_classifyodd})  and  (\ref{u_classifyeven1}) in Lemma \ref{Theorem_structureofall}, 
$ \mathbf u\mathbf u^{\mathrm T} $  can  be  presented in  the  block  form,  which  follows: 
\begin{align}\label{publock}
\mathbf u\mathbf u^{\mathrm T}
\nonumber
& =
\begin{bmatrix}
a^{2} \mathbf J_{k}   &   ab  \mathbf J_{k \times (n-k)}    \\
ab  \mathbf J_{(n-k) \times k}    &   b^{2} \mathbf J_{(n-k)}
\end{bmatrix}
\\ 
&= 
\begin{bmatrix}
a^{2} \mathbf J_{k} &   -\frac 1n  \mathbf J_{k \times (n-k)}    \\
-\frac 1n  \mathbf J_{(n-k) \times k}    &   	b^{2} \mathbf J_{(n-k)}
\end{bmatrix},	
\end{align}
where based on (\ref{ab_solu}), it holds that   
\begin{equation}\label{abmulti}
ab=   - \frac  1n 
. 
\end{equation} 

Similarly,   
$\frac  1n \mathbf 1_{n}\mathbf 1_{n}^{\mathrm T} $   can also be expressed as 
\begin{align}
\label{p1nblock}
\frac  1n \mathbf 1_{n}\mathbf 1_{n}^{\mathrm T} 
=
\begin{bmatrix}
\frac 1n \mathbf J_{k}  &   \frac 1n  \mathbf J_{k \times (n-k)}    \\
\frac 1n  \mathbf J_{(n-k) \times k}    &   	\frac 1n \mathbf J_{(n-k)}
\end{bmatrix}.   	
\end{align}

Summing (\ref{publock}) and (\ref{p1nblock})  yields
 \begin{align}
 \label{u1nblocksum}
 \mathbf u\mathbf u^{\mathrm T} + \frac  1n \mathbf 1_{n}\mathbf 1_{n}^{\mathrm T}
 \nonumber  
 &=
 \begin{bmatrix}
(a^{2} + \frac 1n) \mathbf J_{k}  &   \mathbf O_{k \times (n-k)}      \\
\mathbf O_{ (n-k) \times k }      &   	(b^{2} + \frac 1n)\mathbf J_{(n-k)}
 \end{bmatrix}
 \\ 
  &=
 \begin{bmatrix}
 \frac 1k \mathbf J_{k}  &   \mathbf O_{k \times (n-k)}      \\
 \mathbf O_{ (n-k) \times k }      &   	\frac{1}{n-k}\mathbf J_{(n-k)}
 \end{bmatrix},
 \end{align}
where  for the second  equation,   we   utilize  
\begin{align}
\notag
   a^{2} + \frac 1n =
(\sqrt{\frac{n-k}{k n}} )^{2} +  \frac 1n
=
\frac{n-k}{k n} + \frac 1n
=
\frac 1k.
\end{align}

 Similarly, we derive that 
\begin{align}
\notag
b^{2} + \frac 1n 
=
\frac{1}{n-k}.
\end{align}

Substitute (\ref{u1nblocksum})  into (\ref{projecteddenote}), and we have that 
\begin{align}\label{pup1n}
 \mathbf  P_{\mathbf  A }^{\bot}  
& =
\begin{bmatrix}
\mathbf I_{k} -\frac 1k \mathbf J_{k} &     \mathbf O_{k \times (n-k)}    \\
\mathbf O_{(n-k) \times k }    &   	\mathbf I_{(n-k)} - \frac {1}{n-k} \mathbf J_{(n-k)}
\end{bmatrix}.	
\end{align}

In a similar way, 
by  considering  the  structure of   $\mathbf u$ as  shown in  (\ref{u_classifyodd})  and  (\ref{u_classifyeven1}) in Lemma \ref{Theorem_structureofall}, 
$\mathbf  H(\mathbf u)$  can   also be  presented in  the  block  form,  which  follows: 
\begin{align}\label{Hblock}
\mathbf  H(\mathbf u)
=
\begin{bmatrix}
 \sigma_{a} 	\mathbf I_{k}     &  \mathbf O_{k \times (n-k)}   \\  
\mathbf O_{(n-k) \times k }   &    \sigma_{b} \mathbf I_{(n-k)}
\end{bmatrix},
\end{align}
where 
based on (\ref{Hessianmatrix}), 
we denote 
$ 
 \sigma_{a} =  (m-1) a^{m-2}-\alpha, 
\sigma_{b}  =(m-1) b^{m-2}-\alpha $
for  simplicity.

Combining  (\ref{Mhess2}), (\ref{pup1n}) and  (\ref{Hblock})  yields
\begin{align}
\mathbf {M}
= 
\begin{bmatrix}
 \sigma_{a} 	(\mathbf I_{k} -\frac 1k \mathbf J_{k})    &  \mathbf O_{k \times (n-k)}   \\  
\mathbf O_{(n-k) \times k }   &      \sigma_{b}	(\mathbf I_{(n-k)} - \frac {1}{n-k} \mathbf J_{(n-k)}) 
\end{bmatrix} . 
\end{align}

It can  be  verified that  $  \mathbf I_{k} -\frac 1k \mathbf J_{k} 
= \mathbf I_{k} -\frac 1k \mathbf 1_{k}  \mathbf 1_{k}^{\mathrm T} $  is  an   idempotent matrix, and  its $k$ eigenvalues are    0  and  1 (the number are $k-1$). 
And  similar  results for  $ \mathbf I_{(n-k)} - \frac {1}{n-k} \mathbf J_{(n-k)}$.
Clearly,  $\mathbf M $  is  the  direct  sum  of  two   matrices, 
and based on the  conclusion in  Lemma  \ref{direct_eig}, 
  $n$  eigenvalues  of  $\mathbf M$   are    
\begin{equation}\label{eigensign} 
\underbrace{
	 \sigma_{a}  ,  \sigma_{a},  \dots,   \sigma_{a} }_{k-1},
\underbrace{
	 \sigma_{b}  ,  \sigma_{b},  \dots,   \sigma_{b}  }_{n-k-1}, 
\underbrace{
0, 0}_{2}. 
\end{equation}

Due to the  fact  that 
each element of  $ \mathbf H _{ii} $ 
is  actually 
the gradient of 
$ u_{i}^{m-1} -\alpha u_{i} - \beta $,
$ \sigma_{a}$ and $ \sigma_{b}$ 
actually  reflect 
the descending or ascending 
trend  at the roots.
When $m$ is odd,  as     can be  observed  from  Fig. \ref{odd}, 
it holds that 
\begin{align}
\mathbf H _{ii} 
&=   
[ (m-1) diag (\mathbf u ^{\circledast^{m-2}})
-\alpha \mathbf  I_{n}  ] _{ii} 
\nonumber 
\\
&=
\begin{cases}
(m-1) a^{m-2}-\alpha = \sigma_{a} >0  ,  \quad  \quad    u_{i}=a  >0    \\
(m-1) b^{m-2}-\alpha = \sigma_{b} <0  ,  \quad  \quad    u_{i}=b    <0  \\
\end{cases}  .
\end{align}

 Then, we can conclude that    
 if and only if  $k=1$,  the  $n$  eigenvalues  of  $\mathbf M$  are all  non-positive, and thus  $\mathbf M$   is  negative  semi-definite,  indicating  that  the corresponding  direction  $\mathbf u $  is  the  locally maximized    one. 
 When  $2 \le  k \le \lceil n/2  \rceil $,  the  $n$  eigenvalues  of  $\mathbf M$  
 contains 
 $k-1$   positive  eigenvalues 
 and  
  $n- k-1$  negative  ones,  indicating  that  the corresponding  direction  $\mathbf u $  is  the  saddle  one. 
\end{proof}

 \subsection{Even   $m$ case with (\ref{u_classifyeven1}) }\label{evencase1}
  In this subsection, we first 
 focus on the tensor with  even   $m$,
 whose solutions 
 are as shown in 
 (\ref{u_classifyeven1}). 
 \begin{lemma}\label{Theorem_structureoflocaleven1}
 	For the case where   $m$ is even,  concerning  the solutions 
 with the form of  (\ref{u_classifyeven1}):
 i):
 	when  $k=1$ 
 	the solutions  are   the  locally maximized    points  of    model (\ref{optmodel});
 	2):  	when  $2 \le  k \le \lceil n/2  \rceil $, denote 
 	\begin{align}
 	l(m,n,k)
 	=
 	(m-1)nk^{m-2} - (n-k)^{m-1}- k^{m-1}
 	\end{align}
 If  $ l(m,n,k) >0$, 
 the  corresponding  solutions  are   the  locally minimized    points  of    model (\ref{optmodel}).
 If 
 $ l(m,n,k) < 0$, 
 the    solutions  are   the  saddle    points  of    model (\ref{optmodel}).

 \end{lemma}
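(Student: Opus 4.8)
The plan is to reuse, almost verbatim, the block‑diagonalisation already carried out in the proof of Lemma~\ref{Theorem_structureoflocal}. Equations (\ref{projecteddenote})--(\ref{eigensign}) were derived using only the two‑block shape of $\mathbf u$ that (\ref{u_classifyodd}) and (\ref{u_classifyeven1}) share, together with the identity $ab=-\tfrac1n$ from (\ref{ab_solu}); both facts remain valid for even $m$. Hence the restricted Hessian $\mathbf M$ in (\ref{Mhess2}) is again orthogonally similar to $\sigma_a(\mathbf I_k-\tfrac1k\mathbf J_k)\oplus\sigma_b(\mathbf I_{n-k}-\tfrac1{n-k}\mathbf J_{n-k})$, so its $n$ eigenvalues are $\sigma_a$ (multiplicity $k-1$), $\sigma_b$ (multiplicity $n-k-1$) and $0$ (multiplicity $2$), where $\sigma_a=(m-1)a^{m-2}-\alpha$ and $\sigma_b=(m-1)b^{m-2}-\alpha$. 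Thus the whole problem reduces to determining the signs of $\sigma_a$ and $\sigma_b$.

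For $\sigma_a$: since $a$ solves (\ref{uim1alpha}) we have $a^{m-1}=\alpha a+\beta$, so multiplying $\sigma_a$ by $a>0$ gives $a\sigma_a=(m-1)a^{m-1}-\alpha a=(m-2)\alpha a+(m-1)\beta$, which is strictly positive because $\alpha>0$, $\beta\ge0$ (the Remark following (\ref{beatres})) and $m\ge4$; hence $\sigma_a>0$ always. For $\sigma_b$, the plan is to substitute the closed forms $a^2=\tfrac{n-k}{kn}$, $b^2=\tfrac{k}{(n-k)n}$ from (\ref{ab_solu}) and $\alpha=ka^m+(n-k)b^m$ from (\ref{alphares}), and simplify. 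Since $m$ is even every exponent is a genuine power of $a^2$ or $b^2$, and I expect to reach
\[
\sigma_b=\frac{(m-1)nk^{m-2}-(n-k)^{m-1}-k^{m-1}}{n^{m/2}\,[k(n-k)]^{(m-2)/2}}=\frac{l(m,n,k)}{n^{m/2}\,[k(n-k)]^{(m-2)/2}},
\]
so that $\operatorname{sign}(\sigma_b)=\operatorname{sign}\big(l(m,n,k)\big)$. This is the one slightly tedious computation, but it is routine.

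With the signs in hand the conclusions follow. For (i), $k=1$: there is no $\sigma_a$‑eigenvalue, so $\mathbf M$ has spectrum $\{\sigma_b^{(n-2)},0^{(2)}\}$, and it remains to check $l(m,n,1)=(m-1)n-(n-1)^{m-1}-1<0$ under the standing hypotheses ($n\ge3$, $m\ge4$ even, $(n,m)\ne(3,4)$), which is an elementary inequality verified by the crude bound $(n-1)^{m-1}\ge n(m-1)$ together with the few smallest cases; then $\sigma_b<0$, $\mathbf M\preceq0$ and the point is a (strict) local maximiser, consistent with Lemma~\ref{vectorregularsimplexframe}. For (ii), $2\le k\le\lceil n/2\rceil$: now $k-1\ge1$, so $\sigma_a>0$ is always an eigenvalue of $\mathbf M$; if $l(m,n,k)>0$ then $\sigma_b>0$ as well, so $\mathbf M$ is positive semidefinite (positive definite on the tangent space), a local minimiser; if $l(m,n,k)<0$ then $\sigma_b<0$, so $\mathbf M$ carries eigenvalues of both signs on the tangent space and the point is a saddle. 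One must also observe that the only instance with $n-k-1=0$ (no $\sigma_b$‑eigenvalue) is $n=3$, $k=2$, where $l(m,3,2)>0$ for all even $m\ge6$, so it is consistently a local minimiser, and that the boundary $l=0$ arises only for the already‑excluded pair $(n,m)=(3,4)$.

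The main obstacle I anticipate is bookkeeping rather than conceptual: pushing the closed‑form substitution through to the clean expression for $\sigma_b$ without a sign or exponent slip, and organising the edge cases ($k=1$ versus $k\ge2$, and $n-k-1=0$) so that the trichotomy "local max / local min / saddle" is exhaustive and dovetails with the exclusions of Remark~\ref{RemarkScope}.
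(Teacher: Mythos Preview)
Your proposal is correct and follows essentially the same route as the paper: reuse the block structure (\ref{pup1n})--(\ref{eigensign}) to read off the spectrum $\{\sigma_a^{(k-1)},\sigma_b^{(n-k-1)},0^{(2)}\}$, establish $\sigma_a>0$, and reduce everything to the sign of $\sigma_b$, which after substituting the closed forms for $a,b,\alpha$ becomes a positive multiple of $l(m,n,k)$. Your algebraic argument for $\sigma_a>0$ (via $a\sigma_a=(m-2)\alpha a+(m-1)\beta>0$) is in fact cleaner than the paper's, which simply appeals to the shape of the curve in Fig.~\ref{even}; the only caveat is that your edge case ``$n-k-1=0$, i.e.\ $n=3,\ k=2$'' does not actually arise, since the paper's $\lceil n/2\rceil$ is defined as the floor, so $k=2$ is outside the admissible range for $n=3$.
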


\begin{proof}
Note that even though  for both odd and even $m$ cases, 
the eigenvalue distribution of $\mathbf M$
is with similar structure as shown above  (\ref{eigensign}), 
the sign of    $\sigma_{a}$ and  $\sigma_{b}$
is instead   different for  odd and even $m$ cases.
Different  from  the  conclusion   in the  odd  $m$  case  that  the  sign  of  $\sigma_{a}$ and  $\sigma_{b}$  is  definite,
in  the   even   $m$  case,  we  can  only  determine the  sign  of   $\sigma_{a} > 0$,  while   that of  $\sigma_{b}$  may be  vary  with  $k$,
which     can be  observed from  Fig~\ref{even}.
Due to the  fact  that   $\sigma_{a} > 0$,  it can be  derived  that  when  $k \ge 2$,  there  are  at   least  one  positive   eigenvalue 
$\sigma_{a} > 0$, and  thus  the  corresponding   eigenpair  cannot be  locally  maximized. 
Therefore,  we  only  need  to  analyze the  case  when  $k=1$. 
First,    
$\sigma_{b} $ as  a  function of  $k$  can be  denoted as 
\begin{align}
\sigma_{b}
& =
(m-1)b^{m-2} - \alpha 
\nonumber  \\
&  
=
(m-1)(-  \sqrt{	\frac{k} {(n-k) n}	})^{m-2}  - 
[ k ( \sqrt{	\frac{n-k} { kn}		})^{m} + (n-k) (-  \sqrt{	\frac{k} {(n-k) n}	})^{m} ]
\nonumber  \\
&  =
\frac{ (m-1) k^{r-1}    }
{(n-k)^{r-1}  n^{r-1}   }
-
\frac{  (n-k)^{r}}
{ k^{r-1}  n^{r} }
-
\frac{ k^{r} }
{(n-k)^{r-1}  n^{r}}
\nonumber  \\
&  =
\frac
{  (m-1)nk^{m-2} - (n-k)^{m-1}- k^{m-1}}
{(n-k)^{r-1} k^{r-1}  n^{r}}
\end{align}


The  denominator  will always be positive, i.e., 
$(n-k)^{r-1} k^{r-1}  n^{r} >0 $. 
We only focus on the sign of numerator,  denoted  by
\begin{align}
l(m,n, k)
=
(m-1)nk^{m-2} - (n-k)^{m-1}- k^{m-1}
\end{align}
When $k=1$, the  numerator   equals  to     $
l(m,n, 1) = (m-1)n - (n-1)^{m-1}-1 $. 
Since $n \ge 3$ and $m \ge 4$, and both $  (m-1)n $  and  $ (n-1)^{m-1} $ are  increase function with respect to  $m$ and $n$,  
it holds that 
\begin{align}
l(m,n, 1)
= 
(m-1)n - (n-1)^{m-1}- 1 \le 9-2^{3}- 1 =0
\end{align}
and if and only  if  when $n = 3$ and $m = 4$,  the  equality holds.
Since  we have  excluded the special case of $ (m,n)=(4,3) $
out of the discussed scope (see  Remark  \ref{RemarkScope}  for  details),  
we can conclude that when $k=1$, the eigenvalues  are  definitely  negative, indicating  the  corresponding   eigenpairs   are  locally maximized.
When $2 \le  k \le \lceil n/2  \rceil $,
there are 
always  $k-1$ positive eigenvalues, so the local 
optimality will be determined by the sign of
$l(m,n, k) $,which will vary  for   different combinations for $(m,n)$.
Therefore, in this proof, we do not intend to 
exhaust all cases, but conclude as the content as presented in the above lemma.
\end{proof}

Therefore, 
in both odd  and even  cases, 
we can conclude that  
for the solution 
$ 	\mathbf u
=
\mathbf P
[
a \mathbf 1_{k}^{\mathrm T},
b \mathbf 1_{n-k}^{\mathrm T}
]^{\mathrm T} $   when $k=1$,  
all   the eigenvalue of the  corresponding  matrix  $\mathbf M$ are   negative, and  thus  the  corresponding  eigenpairs are the locally maximized solutions.
This is summarized as the following corollary:
\begin{corollary}\label{coroll}
	All vectors in the regular simplex frame  are the locally maximized solutions of 
	the corresponding model (\ref{optmodelori}).
	\end{corollary}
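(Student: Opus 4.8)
The plan is to assemble Corollary \ref{coroll} directly from the lemmas already proved, the only step needing a short argument being the verification that local maximality survives the change of variables (\ref{udenote}) linking (\ref{optmodelori}) and (\ref{optmodel}).

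First I would invoke Lemma \ref{vectorregularsimplexframe}: for each frame vector $\mathbf{w}_j$, the vector $\mathbf{u}$ obtained from (\ref{udenote}) is the stationary point of (\ref{optmodel}) given by the $k=1$ instance of (\ref{u_classifyodd}) when $m$ is odd and of (\ref{u_classifyeven1}) when $m$ is even. I would then apply the local-optimality classification already established for exactly that structure: Lemma \ref{Theorem_structureoflocal}(i) for odd $m$ and Lemma \ref{Theorem_structureoflocaleven1}(i) for even $m$ both state that the $k=1$ solution is a locally maximized point of (\ref{optmodel}) — recall the even case there relies on $(m,n)=(4,3)$ being excluded, which is part of our standing assumptions (Remark \ref{RemarkScope}). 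Hence in every admissible combination of $m$ and $n$, the image $\mathbf{u}$ of $\mathbf{w}_j$ is a local maximizer of (\ref{optmodel}), and the corollary inherits whichever notion of ``locally maximized'' those lemmas certify.

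The remaining step, and the one requiring genuine (if brief) argument rather than a citation, is to pull this conclusion back to (\ref{optmodelori}). By Property \ref{regularproperty} we have $\mathbf{W}\mathbf{1}_n=\mathbf{0}_{n-1}$ and, since $\mathbf{W}\mathbf{W}^{\mathrm T}=\tfrac{n}{n-1}\mathbf{I}_{n-1}$, the matrix $\mathbf{W}^{\mathrm T}$ has full column rank $n-1$ with image exactly the orthogonal complement $\mathbf{1}_n^{\bot}$; together with (\ref{utu})--(\ref{uln}) this shows that $\mathbf{v}\mapsto\sqrt{(n-1)/n}\,\mathbf{W}^{\mathrm T}\mathbf{v}$ restricts to a continuous bijection from the feasible set $\{\mathbf{v}:\mathbf{v}^{\mathrm T}\mathbf{v}=1\}\subset\mathbb{R}^{n-1}$ onto the feasible set $\{\mathbf{u}:\mathbf{u}^{\mathrm T}\mathbf{u}=1,\ \mathbf{u}^{\mathrm T}\mathbf{1}_n=0\}\subset\mathbb{R}^{n}$, hence (being a continuous bijection between compact Hausdorff spaces) a homeomorphism, which intertwines the two objectives up to the fixed positive factor $(\tfrac{n}{n-1})^{m/2}$. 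Consequently a feasible neighborhood of $\mathbf{u}$ in (\ref{optmodel}) is the image of a feasible neighborhood of $\mathbf{w}_j$ in (\ref{optmodelori}) on which $\mathcal{S}\mathbf{v}^{m}$ equals a positive multiple of $f(\mathbf{u})$, so $\mathbf{w}_j$ is a local maximizer of (\ref{optmodelori}). Since $j$ is arbitrary, all $n$ frame vectors are locally maximized solutions. I expect the main obstacle to be precisely this homeomorphism-plus-positive-scaling bookkeeping (in particular pinning down that the image of the unit sphere of $\mathbb{R}^{n-1}$ is exactly the feasible set of (\ref{optmodel})); once that is in place the corollary is an immediate consequence of Lemmas \ref{vectorregularsimplexframe}, \ref{Theorem_structureoflocal} and \ref{Theorem_structureoflocaleven1}.
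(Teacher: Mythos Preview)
Your proposal is correct and follows essentially the same route as the paper: the corollary is presented there simply as a summary of Lemmas \ref{vectorregularsimplexframe}, \ref{Theorem_structureoflocal}(i), and \ref{Theorem_structureoflocaleven1}(i), with no separate proof beyond the sentence preceding it. The only difference is that you are more careful than the paper in justifying the transfer of local maximality from (\ref{optmodel}) back to (\ref{optmodelori}); the paper relies on the one-to-one correspondence asserted in Section~\ref{reformulated} without spelling out the homeomorphism-plus-positive-scaling argument you give.
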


 \subsection{Even   $m$ case with (\ref{u_classifyeven2}) }
In  this part,  we   analyze  the  sign  of  the  eigenvalues   of  $\mathbf M$    regarding  these  eigenpairs
with  the  following  structure
 as shown in 
(\ref{u_classifyeven2}):
\begin{equation}\label{u3struc}
\mathbf u
=[
c \mathbf 1_{p}^{\mathrm T},
d \mathbf 1_{q}^{\mathrm T},
e \mathbf 1_{s}^{\mathrm T}
]^{\mathrm T}
\end{equation}

Without lossing of generality,  we  assume that  the  following  relationship holds
\begin{equation}
e < d<0 <c. 
\end{equation}

With  this  assumption, the  sign  of  each  diagonal  element  in  the  matrix  $  \mathbf H (\mathbf u)$  can also  be  determined,   which  can be  observed  from  Fig. \ref{even}  and  thus follows:
	\begin{align}\label{eigenclss}
\mathbf H _{ii} 
&=   
[ (m-1) diag (\mathbf u ^{\circledast^{m-2}})
-\alpha \mathbf  I_{n}  ] _{ii} 
\nonumber 
\\
&=
\begin{cases}
(m-1) a^{m-2}-\alpha = \sigma_{c} >0  ,  \quad  \quad    u_{i}=c     \\
(m-1) b^{m-2}-\alpha = \sigma_{d}  <0,  \quad  \quad    u_{i}=d      \\
(m-1) c^{m-2}-\alpha = \sigma_{e}  >0,  \quad  \quad    u_{i}=e      \\
\end{cases}.
\end{align}
Then,  the  eigenvalues are  sorted in  a  ascending  order  as   follows:
\begin{equation}\label{Heigenorder} 
\underbrace{
	\sigma_{d} = \dots, \sigma_{d} }_{q}
<0
\le 
\underbrace{
	\sigma_{c} = \dots, \sigma_{c} }_{p}
\le  
\underbrace{
	\sigma_{e} = \dots, \sigma_{e} }_{s} .
\end{equation}

It  should  be  noted  that   since  we  cannot  obtain  the  explicit  solution  for  $\mathbf u$ as shown in (\ref{u3struc}) in  this  case,  it  will be  difficult  to obtain  all  eigenvalues of $\mathbf M$  as  can be  done    for  the  odd $m$  case  and 
the  case  1  in  Subsection  \ref{evencase1}.
Therefore,  in  this  case,  
we    turn   to  proving    a  weaker   conclusion:  there  is  at  least   one  positive   and  negative  eigenvalue  for $\mathbf M$, which can also 
determine the local  optimality of eigenpairs. 
And the following lemma can be built: 
  
  \begin{lemma}\label{Theorem_structureoflocaleven2}
  	For the case where   $m$ is even,  concerning  the solutions 
  	with the form of  (\ref{u_classifyeven2}),
  except for the case where $q=1$, the other left solutions
  	are   the  saddle    points  of    model (\ref{optmodel}). 
 For $q=1$, it holds that the corresponding solution cannot be a locally maximized one.

  \end{lemma}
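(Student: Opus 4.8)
The plan is to examine the restriction of the matrix $\mathbf M$ in (\ref{Mhess2}) to the tangent subspace $\mathbb V:=\{\,\mathbf z\in\mathbb R^{n}:\mathbf u^{\mathrm T}\mathbf z=\mathbf 1_{n}^{\mathrm T}\mathbf z=0\,\}$, on which $\mathbf P_{\mathbf A}^{\bot}$ (see (\ref{projecteddenote})) acts as the identity; by the second-order necessary condition a local maximizer of (\ref{optmodel}) requires $\mathbf M$ to be negative semidefinite on $\mathbb V$. I will show this is impossible for the structure (\ref{u3struc}) (which gives the $q=1$ assertion), and that when $q\ge 2$ the form $\mathbf M$ is in fact indefinite on $\mathbb V$ (which gives ``saddle''). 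The block form $\mathbf H(\mathbf u)=\sigma_{c}\mathbf I_{p}\oplus\sigma_{d}\mathbf I_{q}\oplus\sigma_{e}\mathbf I_{s}$ is inherited exactly as in (\ref{Hblock}), and (\ref{Mhess2}) comes from the Appendix.

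First I would sharpen the sign information beyond (\ref{Heigenorder}). Writing $g(x):=x^{m-1}-\alpha x-\beta$ and using that for even $m$ its only real roots are $c,d,e$ (and they are simple), we have $g(x)=(x-c)(x-d)(x-e)\,h(x)$ with $h$ monic of degree $m-4$ and $h>0$ on $\mathbb R$. Since $\sigma_{c}=g'(c)$, $\sigma_{d}=g'(d)$, $\sigma_{e}=g'(e)$, differentiating at the roots gives
\[
\sigma_{c}=(c-d)(c-e)h(c)>0,\qquad \sigma_{d}=-(c-d)(d-e)h(d)<0,\qquad \sigma_{e}=(c-e)(d-e)h(e)>0 .
\]
The basic mechanism is then: if $\mathbf z\in\mathbb V$ is supported on a single diagonal block of $\mathbf H(\mathbf u)$ and has zero coordinate sum, then $\mathbf P_{\mathbf A}^{\bot}\mathbf z=\mathbf z$ and $\mathbf M\mathbf z=\sigma_{\bullet}\mathbf z$, so the relevant $\sigma$ is an eigenvalue of $\mathbf M$. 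Taking $\mathbf z=\mathbf e_{i}-\mathbf e_{j}$ with $i\neq j$ in the $d$-block (possible exactly when $q\ge 2$) produces the eigenvalue $\sigma_{d}<0$; the same construction in the $c$-block (resp. $e$-block) when $p\ge 2$ (resp. $s\ge 2$) produces $\sigma_{c}>0$ (resp. $\sigma_{e}>0$). This already settles every case except $p=s=1$: whenever $q\ge 2$ and $p\ge2$ or $s\ge2$ we get one eigenvalue of each sign (a saddle), and whenever $q=1$ with $p\ge2$ or $s\ge2$ we get a positive eigenvalue (not a local max).

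For the remaining case $p=s=1$, so $\mathbf u=(c,\,d\mathbf 1_{q}^{\mathrm T},\,e)^{\mathrm T}$ with $n=q+2$, I would test the one-parameter family $\mathbf z=(z_{1},\,t\mathbf 1_{q}^{\mathrm T},\,z_{n})^{\mathrm T}$: the two linear conditions defining $\mathbb V$ fix $z_{1},t$ up to the scaling $z_{n}$, and a direct substitution shows $\mathbf z^{\mathrm T}\mathbf M\mathbf z$ is a positive multiple of
\[
Q_{q}:=\sigma_{c}(d-e)^{2}+\tfrac{1}{q}\,\sigma_{d}(c-e)^{2}+\sigma_{e}(c-d)^{2}.
\]
With $A:=c-d$, $C:=d-e$, $B:=c-e=A+C$, the factorization above gives $Q_{1}=ABC\big[\,C\,h(c)-B\,h(d)+A\,h(e)\,\big]=ABC\big[\,C(h(c)-h(d))+A(h(e)-h(d))\,\big]$. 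For $m=4$ we have $h\equiv 1$, hence $Q_{1}=0$; but then $n\ge 4$ forces $q\ge 2$, and since $\sigma_{d}<0$ the replacement $\sigma_{d}\mapsto\tfrac1q\sigma_{d}$ strictly increases the expression, so $Q_{q}>0$ and $\mathbf M$ has a positive eigenvalue. For $m\ge 6$ I would prove $C\,h(c)+A\,h(e)\ge B\,h(d)$, giving $Q_{1}\ge 0$ and then $Q_{q}>0$ for $q\ge 2$; the last subcase $q=1$ here forces $p=q=s=1$, i.e. $n=3$, where $\dim\mathbb V=1$ and the single eigenvalue of $\mathbf M$ has the sign of $Q_{1}$, so strict positivity of $Q_{1}$ must be argued separately. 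This is where the feasibility constraint $pc+qd+se=0$ enters: for $p=s=1$ it reads $c+e=-(n-2)d>0$, and for $n=3$ it reduces to $c+d+e=0$, after which the $h$-contributions telescope and $Q_{1}$ is manifestly positive (for $n=3$, $m=6$ one even gets $Q_{1}=(ABC)^{2}$).

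The main obstacle is precisely the inequality $C(h(c)-h(d))+A(h(e)-h(d))\ge 0$ for $m\ge 6$, where $h$ is a genuine positive polynomial of degree $m-4$ rather than a constant. In the odd case and in the two-value even case the analogous quantities factored completely, so definiteness was immediate; here $h$ does not factor, and one must use that $d$ lies strictly between $e$ and $c$ together with the zero-sum constraint on $\mathbf u$ (which pins down the subleading coefficients of $h$ and the location of its positive minimum) in order to bound $h(d)$ against $h(c)$ and $h(e)$. Everything else — the block decomposition of $\mathbf H(\mathbf u)$, the eigenvector construction for $\mathbf M$, the reduction to $Q_{q}$, and the derivation of (\ref{Mhess2}) — is routine given the earlier lemmas.
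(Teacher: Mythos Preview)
Your route is genuinely different from the paper's and more elementary. The paper passes to the nonsymmetric matrix $\mathbf{H}\mathbf{Z}\mathbf{Z}^{\mathrm T}=\mathbf{H}-\mathbf{H}\bigl(\mathbf{u}\mathbf{u}^{\mathrm T}+\tfrac1n\mathbf{1}_{n}\mathbf{1}_{n}^{\mathrm T}\bigr)$, computes the two nonzero eigenvalues of the rank--two piece via the characteristic polynomial of an auxiliary matrix $\mathbf{G}$, and then invokes Weyl's inequalities to control the eigenvalues of the sum (note that Weyl, as stated in Lemma~\ref{weyltheo}, requires Hermitian summands, whereas the second summand here is not symmetric). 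You instead exhibit explicit eigenvectors of $\mathbf{M}$ on $\mathbb V$: the differences $\mathbf{e}_{i}-\mathbf{e}_{j}$ inside any block of size $\ge 2$ yield the eigenvalue $\sigma_{\bullet}$ directly, with no perturbation theory. This disposes of every configuration with $\max(p,s)\ge 2$ in one line and reduces the remaining case $p=s=1$ to the sign of the single scalar $Q_{q}$.

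Your factorization $Q_{1}=ABC\bigl[\,C(h(c)-h(d))+A(h(e)-h(d))\,\bigr]$ then gives $Q_{1}=0$ for $m=4$ (whence $Q_{q}>0$ whenever $q\ge 2$) and $Q_{1}=(ABC)^{2}$ for $n=3$, $m=6$; this is sharper than the paper's treatment of the same subcase, which for $m=4$ argues $c+d+e=0\Rightarrow d=0$ and for $m\ge 6$ simply asserts that ``the same conclusion with the $m=4$ case can be arrived''. The obstacle you isolate --- the inequality $Ch(c)+Ah(e)\ge Bh(d)$ for general even $m\ge 6$ --- is therefore not a new gap but a precise localization of the step that neither argument closes rigorously. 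Since $d=\tfrac{C}{B}c+\tfrac{A}{B}e$, your inequality is exactly the chord condition $h(d)\le\tfrac{C}{B}h(c)+\tfrac{A}{B}h(e)$ for the positive even--degree cofactor $h$; convexity of $h$ would suffice (and does settle $m=6$, where $h$ is a monic quadratic), but for $m\ge 8$ one genuinely needs further input from the specific form $g(x)=x^{m-1}-\alpha x-\beta$.
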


\begin{proof}
First,   $  \mathbf {M} $ 
is rewritten
as:
\begin{align}\label{matrix_Mfor3} 
\mathbf {M} 
&=
\mathbf  P_{\mathbf  A } ^{\bot}   \mathbf H (\mathbf u)  \mathbf  P_{\mathbf  A }^{\bot}
=
(\mathbf  I_{n}  - 
\mathbf u\mathbf u^{\mathrm T}  
- \frac  1n \mathbf 1_{n}\mathbf 1_{n}^{\mathrm T} )
 \mathbf H
 (\mathbf  I_{n}  - 
 \mathbf u\mathbf u^{\mathrm T}  
 - \frac  1n \mathbf 1_{n}\mathbf 1_{n}^{\mathrm T} )
\end{align}  



Denote 
\begin{align}\label{Qmat}
\mathbf Q
=
[\mathbf q_{1},  \mathbf q_{2}, \dots,  \mathbf q_{n}] 
\in  
\mathbb R^{n \times n}
\end{align}
is  an  orthogonal  matrix, i.e., $\mathbf Q^{\mathrm T}\mathbf Q = \mathbf Q \mathbf Q^{\mathrm T} =\mathbf I$.
By  further  considering  and  utilizing  the constraint   $  \mathbf u^{\mathrm T}   \mathbf 1_{n} =0$   and  $  \mathbf u^{\mathrm T}   \mathbf u =1$  in the  following,  we  can  then  set 
$\mathbf q_{1} = 
\frac {\mathbf 1_{n}} { \sqrt{n}}$  and  $\mathbf q_{2} =  \mathbf u$.
Since   $ \mathbf Q $ is  an  orthogonal  one,  then  $ \mathbf M $  and   $ \mathbf Q^{\mathrm T}  
\mathbf  M
\mathbf Q $   are   similar  to each  other, which  will be    with  the  same   eigenvalues  according   to  the  conclusion in linear algebra. 


Then,  
the  following  several  relationships  can  be  established.
It  holds  that
\begin{align}\label{uunnQ}
(\mathbf u\mathbf u^{\mathrm T}  
+ 
\frac  1n \mathbf 1_{n}\mathbf 1_{n}^{\mathrm T} )
\mathbf Q
=
[
\frac {\mathbf 1_{n}} { \sqrt{n}} , \mathbf u , \mathbf 0, \dots, \mathbf 0
]
\end{align}

\begin{align}\label{HuunnQ}
\mathbf H
(\mathbf u\mathbf u^{\mathrm T}  
+ 
\frac  1n \mathbf 1_{n}\mathbf 1_{n}^{\mathrm T} )
\mathbf Q
=
[
\frac {\mathbf H \mathbf 1_{n}} { \sqrt{n}} , \mathbf H \mathbf u , \mathbf 0, \dots, \mathbf 0
]
\end{align}

\begin{align}\label{HU} 
\mathbf H
\mathbf u
& =
((m-1) diag (\mathbf u ^{\circledast^{m-2}})
-\alpha \mathbf  I_{n}) \mathbf u
\nonumber  \\
&=
(m-1)  \mathbf u ^{\circledast^{m-1}}-\alpha \mathbf u
\nonumber  \\
& =
(m-1) \alpha  \mathbf u + (m-1) \beta \mathbf  1_{n}-\alpha \mathbf u 
\nonumber  \\
& =
(m-2) \alpha  \mathbf u + (m-1) \beta \mathbf  1_{n}
\end{align}  
where  (\ref{KKTgradient}) is utilized in the third rquation.
Similarly, we have that 
\begin{equation}\label{HLN} 
\mathbf H
\mathbf  1_{n}
=
(m-1)  \mathbf u ^{\circledast^{m-2}}-  \alpha  \mathbf  1_{n}
\end{equation}  

Furthermore, it  can be  derived  that 
\begin{align}\label{UHU} 
\mathbf u^{\mathrm T}
\mathbf H
\mathbf u
=
(m-2) \alpha,  
\quad
\mathbf  1_{n}^{\mathrm T}
\mathbf H
\mathbf  1_{n}
=
(m-1) \gamma
-
n  \alpha  ,
\quad
\mathbf u^{\mathrm T}
\mathbf H
\mathbf 1_{n}
=
\mathbf  1_{n}^{\mathrm T}
\mathbf H
\mathbf  u
=
n(m-1) \beta,
\end{align} 
where
$ \gamma
=
\mathbf 1_{n}^{\mathrm T}  
\mathbf u ^{\circledast^{m-2}}
=
\sum u_{i}^{m-2}
>0
$
for  even  $m$  case, and 
we use 
that 
$ \mathbf u^{\mathrm T}  
\mathbf u ^{\circledast^{m-2}}
=
n \beta
$.

Substitute   (\ref{uunnQ}) $\sim$ (\ref{UHU}) into 
$
\mathbf Q^{\mathrm T}  
\mathbf   M
\mathbf Q
$, and 
we can obtain that 
\begin{align}\label{blockdiag0}
\mathbf Q^{\mathrm T}  
\mathbf   M
\mathbf Q
=
	\left[\begin{array}{cc}
	\mathbf {O_{2}} & \mathbf {O}_{2 \times (n-2)} \\
	\mathbf {O}_{(n-2) \times 2} & 
	\mathbf Z^{\mathrm T}  
	\mathbf H
	\mathbf Z
\end{array}\right]
\end{align}
where 
\begin{align}\label{Hblockfor3}
\mathbf   Z
=
\mathbf   Q_{:,3:n}
=
[\mathbf q_{3},  \mathbf q_{4}, \dots,  \mathbf q_{n}] 
\in  
\mathbb R^{n \times (n-2)}
\end{align}
is a column-orthogonal matrix 
that satisfies 
$\mathbf Z^{\mathrm T}   \mathbf   Z 
=
\mathbf I_{n-2}$.  

Assuming  that 
the $n-2$ eigenvalues of 
$ 	\mathbf Z^{\mathrm T}  
\mathbf H
\mathbf Z $
are  denoted  by 
$
\lambda_{1}, \lambda_{1},  \dots, \lambda_{n-2}$.
Based on  the conclusion  in 
Lemma  \ref{direct_eig},  the $n$  eigenvalues  of  
$\mathbf Q^{\mathrm T}  
\mathbf   M
\mathbf Q$ 
and   
$
\mathbf   M
$ 
can  be  given  by 
$
\lambda_{1}, \lambda_{1},  \dots, \lambda_{n-2}, 0, 0$.
Therefore,  the  following  task  is to  determine  the 
$n-2$ eigenvalues of 
$ 	\mathbf Z^{\mathrm T}  
\mathbf H
\mathbf Z $.
However,  
this is  still a   tough  problem.
By  utilizing  the  conclusion  in  Lemma 
\ref{AB_BA_eig}, 
we   further  analyze  the  eigenvalues   distribution  of 
\begin{align}\label{hzzt}
\mathbf H
\mathbf Z
	\mathbf Z^{\mathrm T} 
	=
\mathbf H
 (\mathbf  I_{n}  - 
\mathbf u\mathbf u^{\mathrm T}  
- \frac  1n \mathbf 1_{n}\mathbf 1_{n}^{\mathrm T} )
=
\mathbf H
  - 
\mathbf H
(\mathbf u\mathbf u^{\mathrm T}  
+\frac  1n \mathbf 1_{n}\mathbf 1_{n}^{\mathrm T} ).
\end{align}

Definitely,  its   $n$  eigenvalues  are  also   given  by 
$
\mathbf \Sigma=
diag( 0, 0, \lambda_{1}, \lambda_{2},  \dots, \lambda_{n-2}) $.

Since  $\mathbf H$  has  been  a  diagonal  matrix,  and its  eigenvalues  are  easy  to be  determined  as  presented  in (\ref{Heigenorder}). 
So,  the  following  main  task  is  to  determine   the  eigenvalues  of  $
 -
\mathbf H
(\mathbf u\mathbf u^{\mathrm T}  
+\frac  1n \mathbf 1_{n}\mathbf 1_{n}^{\mathrm T} )  
$.
A  similar  orthogonal  transformation  by  $\mathbf Q$  is  also  first  performed  on  this   matrix  and 
eventually, 
  the  matrix  $
  -\mathbf Q^{\mathrm T}  
  \mathbf H
  (\mathbf u\mathbf u^{\mathrm T}  
  +\frac  1n \mathbf 1_{n}\mathbf 1_{n}^{\mathrm T} )
  \mathbf Q  
  $  is  with the  following   structure:
\begin{align}\label{QHuuQ}
  -\mathbf Q^{\mathrm T}  
\mathbf H
(\mathbf u\mathbf u^{\mathrm T}  
+\frac  1n \mathbf 1_{n}\mathbf 1_{n}^{\mathrm T} )
\mathbf Q  
=
\begin{bmatrix}
g_{11}    &  g_{12}  &  0  &  \dots  &   0  \\  
g_{21}    &  g_{22}  &  0  &  \dots  &   0  \\  
g_{31}  &  0    &  0  &  \dots  &   0      \\
\vdots                 \\
g_{n1}  &  0    &  0  &  \dots  &   0 
\end{bmatrix}
:=
\mathbf G
\end{align}
where
$ g_{11} = 
 \alpha - \frac 
 {(m-1)\gamma}
 {n} , g_{22} =-(m-2)\alpha <0, 
g_{12} = g_{21} = -\sqrt {n}(m-1)\beta  \le 0 $  for  even $m$  cases, 
$g_{1j} = g_{j1} = -\frac{  (m-1) \mathbf q_{j}^{\mathrm T} \mathbf u ^{\circledast^{m-2}} }{ \sqrt{n}} , j=3,4,\dots, n$,


With  this  form,  we then   analyze  the  sign  of  the eigenvalue of $\mathbf G$, denoted  in  a  descend  order  $\varepsilon_{1} \le  \varepsilon_{2}\le \dots  \le \varepsilon_{n}$,  which  can  be  uniformly  transformed  into  finding   the  roots of  the  characteristic polynomial of 
\begin{align}\label{charpoly}
f(\varepsilon)
&
=
det(\mathbf {G} -  \varepsilon \mathbf {I} )
 \nonumber  \\
&=
(g_{22} - \varepsilon)
(- \varepsilon)^{n-2}
det(
g_{11} - \varepsilon
-
\begin{bmatrix}
g_{21}    \\  
g_{31}      \\
\vdots                 \\
g_{n1}  
\end{bmatrix}
^{\mathrm T}
\begin{bmatrix}
 g_{22} - \varepsilon  &  0  &  \dots  &   0  \\  
0  &  - \varepsilon    &  0  &  \dots  &   0      \\
\vdots                 \\
g_{n1}  &  0    &  0  &  \dots  &  - \varepsilon 
\end{bmatrix}^{-1}
\begin{bmatrix}
g_{12}    \\  
0     \\
\vdots                 \\
0  
\end{bmatrix}
)
 \nonumber  \\
&=
(g_{22} - \varepsilon)
(- \varepsilon)^{n-2}
(
g_{11} - \varepsilon
-
\frac{g_{12}^{2}}{g_{22} - \varepsilon}
)
 \nonumber  \\
&=
-
(- \varepsilon)^{n-2}
(
 \varepsilon^{2}
-
(g_{11} +g_{22}) \varepsilon
+
(g_{11} g_{22} 
-
g_{12}^{2}
)
\end{align}
where
  in  the second equation,  we 
block the matrix  into $1 \times 1 $ and  $(n-1) \times (n-1)  $  and  then utilize  (\ref{detblock}).

When 
$ f(\varepsilon) =0$, it  can be  observed
from (\ref{charpoly})  that  it  has  $(n-2)$-fold  roots  at $\varepsilon=0$.
Therefore,  in  the  following, we further  need  to  determine  the sign  of  the  left  two  non-zero   roots, which can  be  calculated  by  
\begin{align}
z(\varepsilon) 
=
\varepsilon^{2}
-
(g_{11} +g_{22}) \varepsilon
+
(g_{11} g_{22} 
-
g_{12}^{2}
)
\end{align}


Clearly,  $ z(\varepsilon) $ is  an
univariate quadratic equation.
On the  one  hand, 
 the   axis of symmetry
is  
given  by  
$g_{11} +g_{22}$.
Since 
\begin{align}\label{gammaalpha}
\gamma
&=
  \sum\limits_{i=1}^{n}  u_{i}^{m-2}
  =
 (  \sum\limits_{i=1}^{n}  u_{i}^{m-2}) 
 (  \sum\limits_{i=1}^{n}  u_{i}^{2}) 
 =
 \sum\limits_{i=1}^{n}  u_{i}^{m}
 +
 \sum\limits_{i,j=1}^{n}  u_{i}^{m-2} u_{j}^{2}
 =
 \alpha + \theta
\end{align} 
where 
$  \theta >0$  for  even  $m$  cases due  to  each  summation  term in   $ \theta $ is a  positive one.
It can be checked that 
\begin{align}\label{trg}
 tr(\mathbf G)
 &= g_{11} +g_{22}
=
  \alpha - (m-1)\gamma/n -  (m-2)\alpha
 \nonumber \\
 &=
- ( \frac {m-1}{n} +m-3) \alpha - 
 \frac {m-1}{n}  \theta
<0, 
 \end{align}

On the  other  hand,     the discriminant
can be  computed  by 
   $ 
   \Delta = (g_{11} +g_{22})^{2} - 4(g_{11} g_{22} - g_{12}^{2} ) 
   =
   (g_{11} -  g_{22})^{2} +4 g_{12}^{2}   \ge 0$. 
Therefore, 
the  other   two  non-zero  eigenvalues  of  
$ \mathbf G$  
are  given  by 
 \begin{align}
\varepsilon_{1} = 
 \frac{ (g_{11}+g_{22}) - \sqrt{\Delta}}{2}, 
 \varepsilon_{2}
 =
  \frac{ (g_{11}+g_{22})  + \sqrt{\Delta}}{2}.
  \end{align} 
In  addition,  
based on   the  fact  that  the  axis of  symmetric  is negative  by (\ref{trg}), we also  have  that  
$ \varepsilon_{1} < 0$, while the sign of 
$ \varepsilon_{2} $ 
cannot be  determined.
Furthermore, 
since 
\begin{align}
z(g_{11}) 
=
z(g_{22}) 
=
-
g_{12}^{2}
<0 ,
\end{align}
it also  holds  that 
$
\varepsilon_{2}
> 
g_{11}$, 
and 
$
\varepsilon_{2}
> 
g_{22}$.

So  far,  we  have  
analyzed the  eigen-structure
of  $\mathbf G$,  and 
according to 
(\ref{QHuuQ}),
the  $n$  eigenvalues  of 
$
-
\mathbf H
(\mathbf u\mathbf u^{\mathrm T}  
+\frac  1n \mathbf 1_{n}\mathbf 1_{n}^{\mathrm T} )  
$
can be  naturally  determined,  which  contains 
$n-2$ zero  roots  and 
2  non-zero  ones  
$ \varepsilon_{1}, 
\varepsilon_{2} $.
We  sort    these   roots  in  a  ascending  order   as       follows,  
which may  have  two  cases:
\begin{equation}\label{case1} 
Case \quad  1:  \varepsilon_{1}
<
\varepsilon_{2}
<
\underbrace{
	0, 0, \dots, 0}_{n-2}
\end{equation}
or 
\begin{equation}\label{case2} 
Case \quad  2:  \varepsilon_{1}
<
\underbrace{
	0, 0, \dots, 0}_{n-2}
<
\varepsilon_{2}
\end{equation}



Then, by  (\ref{hzzt}),  we
trun  to  analyzing 
the  eigenvalues  
and 
tend to 
show that 
there are at least one positive  and negative  eihenvalues for 
$
\mathbf H
\mathbf Z
\mathbf Z^{\mathrm T} 
$, 
when  we  have  clearly known  the 
eigenvalues  distribution of
its  divided two parts
$
\mathbf H
$ (as listed in (\ref{Heigenorder}))
and
$
-
\mathbf H
(\mathbf u\mathbf u^{\mathrm T}  
+\frac  1n \mathbf 1_{n}\mathbf 1_{n}^{\mathrm T} )  
$ (as listed in (\ref{case1})  or (\ref{case2})).
Two cases are discussed as follows:


\textbf{Case 1:}
Since $p \ge 1,q \ge 1,s \ge 1$ 
and $p +q +s =n$,  
the  number  of    positive  eigenvalues   for  matrix $
\mathbf H
$
holds  that 
$ p+s \ge 2$.
We  first   focus on  the case  1  shown in 
 (\ref{case1}).
  by  using the Weyl  theorem  in  Lemma  \ref{weyltheo},  
  there is also  at  least  one  negative  eigenvalue.
 When 
 $ p+s \ge 3$, 
 by  using the Weyl  theorem  in  Lemma  \ref{weyltheo},  
 there is also  at  least  one  positive  eigenvalue
 for 
 $
 \mathbf H
 \mathbf Z
 \mathbf Z^{\mathrm T} 
 $.
 The  only  left   one  situation  that  need to be  proved  is  
 the  case  of 
  $ p+s =2$ (equivalently, $ p=1,s=1, q=n-2$). 
 In this case, there are only  two positive  eigenvalues
  $  \sigma_{c}  \le  \sigma_{e} $ for 
 the matrix $\mathbf H$, and the two non-zero  roots 
 $\varepsilon_{1} \le  \varepsilon_{2}$ for 
 $
 -
 \mathbf H
 (\mathbf u\mathbf u^{\mathrm T}  
 +\frac  1n \mathbf 1_{n}\mathbf 1_{n}^{\mathrm T} )  
 $.

The  following  inequality  holds that  
  \begin{align}
 n  \sigma_{d}    <  tr(\mathbf H)=
 (m-1) \gamma
 -
 n  \alpha  
 =
 {p} \sigma_{c} + 
 {q} \sigma_{d} + 
 {s} \sigma_{e} 
 < 
 n  \sigma_{e} 
 \end{align} 
we can  conclude 
 \begin{align}\label{sigmac}
\sigma_{c}  + 
\varepsilon_{2}
&
>
\frac{tr(\mathbf H)}{n}
+g_{11}
\ge 0
\end{align} 


For  the  case  $m=4$,  we can further  have that 
\begin{equation}\label{cde}
(u_{i}-c)
(u_{i}-d)
(u_{i}-e)
=
u_{i}^{m-1} -\alpha u_{i} - \beta 
\end{equation}
which will  deduce that 
$ c+d+e=0$.
Then,  
solving the  following system  
\begin{equation}\label{ab_expreessfor3}
\begin{cases}
pc^{2}+q d^{2} +s e^{2} =1 \\
pc+q d +s e =1\\
c+d+e=0
\end{cases}. 
\end{equation}
will  derive  that 
$  
d=0$,
 and 
 $ 
 c= -e,  \sigma_{c} =  \sigma_{e}
 $. 
Combining  
with (\ref{sigmac}),  it holds that 
$\sigma_{e}  + 
\varepsilon_{2} \ge   0$.
 Using  the Weyl  theorem  in  Lemma  \ref{weyltheo},
 it can be concluded that 
 there is also  at  least  one  positive  eigenvalue
 for 
 $
 \mathbf H
 \mathbf Z
 \mathbf Z^{\mathrm T} 
 $.

 For the even $m\ge 6$ cases, we cannot directly   obtain 
 the  new  constraint 
 $ c+d+e=0$ by (\ref{cde}).
 However, 
 as can be seen from 
 Fig 	\ref{curveplot}
 and 
 Lemma  \ref{Theorem_structureofall},
 the numer of all eigenpairs 
 is only determined by 
 dimension $n$
 in both odd and  even $m$  cases, 
 and 
 this indicates that 
for the even $m\ge 6$ cases, 
the same conclusion with the $m=4$ 
case can be arrived, which will also
 conclude that 
 $\sigma_{e}  + 
 \varepsilon_{2} \ge   0$,
 indicating that 
  there is also  at  least  one  positive  eigenvalue
 for 
 $
 \mathbf H
 \mathbf Z
 \mathbf Z^{\mathrm T} 
 $.
 So far, we have shown that 
 there are at  least one positive and negative eigenvalue
 for 
  $
 \mathbf H
 \mathbf Z
 \mathbf Z^{\mathrm T} 
 $ 
 in all  conbinations 
 for case 1.

 \textbf{Case 2:}
The analysis of case  2  is very similar to that for case 1.  
 Clearly, for  the  case 2  shown  in  (\ref{case2}), 
 since the  number  of    positive  eigenvalues   for  matrix $
 \mathbf H
 $
 holds  that 
 $ p+s \ge 2$,
 by  using the Weyl  theorem  in  Lemma  \ref{weyltheo},  
 there is always  at  least  one  positive  eigenvalue
 for 
 $
 \mathbf H
 \mathbf Z
 \mathbf Z^{\mathrm T} 
 $.
When 
$q > 2$, 
by  using the Weyl  theorem  in  Lemma  \ref{weyltheo},  
there is also  at  least  one  negative  eigenvalue
for 
$
\mathbf H
\mathbf Z
\mathbf Z^{\mathrm T} 
$.
The  only  left   one  situation  that  need to be  proved  is  
the  case  of 
$ q=1$. 
For $ q=1$,  it still holds that 
 there is always  at  least  one  positive  eigenvalue, indicating 
 that it cannot be a locally maximized one. 
 To determine whether it is a  saddle or locally minimized one remains unsolved. 
  The proof  for the lemma  is  complete.
 
\end{proof}

\section{Future Work}\label{futurework}
The regular simplex tensor
is a  newly-emerging 
concept and has received  some  attentions since its proposal.   
In  this part, we  will  discuss  some  interesting issues that are worth studying in the future.

1. 
One widely researched issue concerning the tensor eigenpair concept
is 
to identify whether the  eigenpairs  obtained by the tensor power method  is  robust or not. 
The detailed definition for robust eigenpairs can refer to Theorem 3.2 of Ref  \cite{RobustEigen}.
Concerning this subject, it has been discussed in  Ref  \cite{RobustEigen}
but is not comprehensively 
addressed,
which was claimed as the following conjecture:
	\begin{conjecture}[Conjecture 4.7 in  Ref  \cite{RobustEigen} ] \label{conjecturesimplex}
	The robust eigenvectors of a regular simplex tensor   are precisely the
	vectors in the frame.
\end{conjecture}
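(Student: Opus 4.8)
The plan is to prove the statement by a double inclusion anchored on the robustness criterion of Theorem~3.2 of \cite{RobustEigen}, which characterizes a unit eigenpair $(\lambda,\mathbf v)$ with $\lambda>0$ as robust exactly when every eigenvalue $\mu$ of the restriction of $(m-1)\mathcal S\mathbf v^{m-2}$ to the tangent space $T_{\mathbf v}=\{\mathbf x:\mathbf x^{\mathrm T}\mathbf v=0\}$ satisfies $|\mu|<\lambda$; equivalently, $\mathbf v$ is a \emph{strict} local maximizer of (\ref{optmodelori}) (the half $\mu<\lambda$) \emph{and} the complementary bound $\mu>-\lambda$ holds. Because robustness therefore implies strict local maximality, while a strict local maximizer is robust as soon as the complementary bound holds, the argument splits into: (a) the strict local maximizers are exactly the frame vectors, and (b) the complementary bound is met at each frame vector.

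For (a), pass to the equivalent reformulation (\ref{optmodel}) through the correspondence of Section~\ref{reformulated} and use the classification already obtained. Lemma~\ref{Theorem_structureofall} lists all stationary points and Lemmas~\ref{Theorem_structureoflocal}, \ref{Theorem_structureoflocaleven1}, \ref{Theorem_structureoflocaleven2} fix their type. For odd $m$, every point (\ref{u_classifyodd}) with $k\ge 2$ carries the strictly positive eigenvalue $\sigma_a$ of $\mathbf M$ and is a saddle. For even $m$, the points (\ref{u_classifyeven1}) with $k\ge 2$ again carry $\sigma_a>0$ and so are not local maxima whatever the sign of $l(m,n,k)$, while the points (\ref{u_classifyeven2}) are saddles except possibly for $q=1$, where Lemma~\ref{Theorem_structureoflocaleven2} still exhibits a positive eigenvalue and thus excludes a local maximum; the degenerate case $(m,n)=(4,3)$ is removed by Remark~\ref{RemarkScope}. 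Hence the strict local maximizers of (\ref{optmodel}) are precisely the $k=1$ solutions of (\ref{u_classifyodd})/(\ref{u_classifyeven1}), which by Lemma~\ref{vectorregularsimplexframe} pull back (with the usual $\pm\mathbf v$ identification when $m$ is odd) to exactly $\{\mathbf w_1,\dots,\mathbf w_n\}$; combined with the criterion, every robust eigenvector is a vector in the frame, and by Corollary~\ref{coroll} each $\mathbf w_i$ is at least a strict local maximizer.

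For (b), at a $k=1$ solution $\mathbf u=\mathbf P[\,a,\ b\mathbf 1_{n-1}^{\mathrm T}\,]^{\mathrm T}$ with $a=\sqrt{(n-1)/n}$, $b=-1/\sqrt{n(n-1)}$ and $\alpha=f(\mathbf u)=a^m+(n-1)b^m$, the eigenvalue list (\ref{eigensign}) shows the only genuine eigenvalue of $\mathbf M$ is $\sigma_b=(m-1)b^{m-2}-\alpha$, of multiplicity $n-2$; the strict local-max half $\sigma_b<0$ is already available, so robustness of $\mathbf w_i$ reduces to the single scalar inequality $\alpha+(m-1)b^{m-2}>0$. For even $m$ this is immediate, since $b^{m-2}>0$ and $\alpha>0$. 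For odd $m$, substituting the explicit values of $a,b,\alpha$ and clearing positive factors, the inequality collapses to $(n-1)^{m-1}>(m-1)n+1$, to be checked over the admissible range $m\ge 3$, $n\ge 3$, $(m,n)\neq(4,3)$ by elementary monotonicity in $m$ and $n$.

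The main obstacle is precisely the odd-order inequality in step (b). It holds easily once $m\ge 5$ or $n$ is moderately large, but for $m=3$ it is equivalent to $n\ge 5$, so it \emph{fails} at $(m,n)=(3,3)$ (there the power-iteration Jacobian on $T_{\mathbf v}$ has eigenvalue $-2$) and is only borderline at $(m,n)=(3,4)$ (eigenvalue $-1$, neutral at linear order). Thus the honest conclusion of this programme is not the conjecture verbatim but the corrected statement ``the robust eigenvectors of a regular simplex tensor are precisely the vectors in the frame whenever $(n-1)^{m-1}>(m-1)n+1$, and there are none otherwise'', so the remaining work is to pin down the exceptional set exactly (in particular to resolve the neutral case $(3,4)$ by a higher-order stability analysis) and, secondarily, to confirm that the robustness criterion and the $\pm\mathbf v$ bookkeeping for odd order are being applied in the form intended in \cite{RobustEigen}.
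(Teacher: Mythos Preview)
The paper does not prove this statement. Conjecture~\ref{conjecturesimplex} appears only in Section~\ref{futurework} (Future Work), where it is explicitly listed as an open problem: ``A rigorous and complete theoretical proof is one of the following tasks to be paid attentions.'' There is therefore no ``paper's own proof'' to compare against; your proposal goes strictly beyond the paper.

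That said, your analysis is largely sound and in fact uncovers that the conjecture, read literally, is \emph{false}. Your computation for odd $m=3$ is correct: at a frame vector the tangent Jacobian eigenvalue of the power map equals $(m-1)b^{m-2}/\alpha$, which evaluates to $-2$ when $n=3$ and $-1$ when $n=4$. Hence for $(m,n)=(3,3)$ the frame vectors are repelling (so no eigenvector is robust), and $(m,n)=(3,4)$ is neutrally stable at linear order. Your corrected statement---robust eigenvectors are exactly the frame vectors iff $(n-1)^{m-1}>(m-1)n+1$---is the right replacement, and the derivation of that scalar inequality from $\alpha+(m-1)b^{m-2}>0$ is clean.

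Two places to tighten before this becomes a proof rather than a programme:
\begin{itemize}
\item The passage of the robustness criterion from $\mathbf v$-coordinates to $\mathbf u$-coordinates needs one explicit sentence. Since $\mathbf u=\sqrt{(n-1)/n}\,\mathbf W^{\mathrm T}\mathbf v$ is a linear isometry from $S^{n-2}$ onto the constraint manifold $\{\mathbf u:\|\mathbf u\|=1,\ \mathbf u^{\mathrm T}\mathbf 1_n=0\}$, and $f(\mathbf u)=((n-1)/n)^{m/2}\,\mathcal S\mathbf v^m$, the ratio (tangent-Hessian eigenvalue)$/$(Lagrange multiplier) is invariant under the change of variables, so $|\mu|<\lambda$ in $\mathbf v$-space is equivalent to $|(m-1)b^{m-2}|<\alpha$ in $\mathbf u$-space. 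You flag this at the end; promote it to the body.
\item Your step~(a) leans on Lemma~\ref{Theorem_structureoflocaleven2} to rule out the three-value solutions \eqref{u_classifyeven2} as local maxima. The paper's proof of that lemma is not fully rigorous (the $q=1$ subcase is left open, and the $m\ge 6$ argument is heuristic). For your purposes you only need ``at least one positive eigenvalue of $\mathbf M$'', and the lemma does deliver that in every subcase, so your use is safe; but if you intend a self-contained write-up you should supply the missing estimate yourself rather than cite the lemma as is.
\end{itemize}
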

A rigorous and  complete   theoretical proof  is  one of the  following  tasks to be paid attentions. 


2. Consider  the  following  more  generalized   construction of   regular simplex tensor
	\begin{equation}
\mathcal{S}:=\sum_{i=1}^{n} 
\lambda_{k} \mathbf{w}_{i}^{\circ m}
\end{equation}
where the  weight  factors $\lambda_{k}$  is   included  in  each  term,  which we 
would like to term as the weighted regular simplex tensor.
In this  paper,  
we only consider  one  special case where all  $\lambda_{k} $  are  equal to 1, as defined in 
(\ref{simplextensor}).
The  above weighted version could be  more generalized.
What is the eigen-structure of  its  eigenpairs?
Whether the   conclusion  in conjecture  \ref{conjecturesimplex}  
can be  extended into the generalized  weighted regular simplex tensor?
These  remain    open problems.

3. The other conjectures  claimed in the original reference \cite{RobustEigen}
are also worth investigating  in the next  stage. 
One can refer to Conjecture 4.8 and 5.2 for details.

\section{Appendix}

In this  appendix part,   
we will provide some important theorems   and lemmas 
 used in our proof.
 In addition,  
the detailed  explanations for the second-order necessary  condition is also presented.

\begin{lemma}(Ref \cite{zhang2017matrix})\label{direct_eig}
	Suppose  that 
	$ (\lambda_{i},  \mathbf u_{i})$  
	($i=1,2,\dots,n$),
	$  (\sigma_{j},  \mathbf v_{j})$  
	($j=1,2,\dots,m$)  
	are  eigenpairs of  
	$ \mathbf A$ and $ \mathbf B$, respectively. 
	The ($n+m$)  eigenpairs   of  
	$ \mathbf  {A} \oplus \mathbf  {B}$  
	is  
	$ (\lambda_{i},  
	\begin{bmatrix}
	\mathbf u_{i}   \\
	\mathbf 0_{m}    
	\end{bmatrix}	)$, 
	$ (\sigma_{j},  
	\begin{bmatrix}
	\mathbf 0_{n}   \\
	\mathbf v_{j}
	\end{bmatrix}	)$ 	($i=1,2,\dots,n$, $j=1,2,\dots,m$).
\end{lemma}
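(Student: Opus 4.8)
The plan is to establish the claim in two stages: first verify that each listed pair genuinely satisfies the eigenvalue equation for $\mathbf{A} \oplus \mathbf{B}$, and then argue that these $n+m$ pairs exhaust all the eigenpairs. For the verification stage I would simply carry out the block matrix--vector product. Starting from $\mathbf{A}\mathbf{u}_i = \lambda_i \mathbf{u}_i$, the definition of the direct sum gives
\[
(\mathbf{A} \oplus \mathbf{B})
\begin{bmatrix} \mathbf{u}_i \\ \mathbf{0}_m \end{bmatrix}
=
\begin{bmatrix} \mathbf{A} & \mathbf{O}_{n \times m} \\ \mathbf{O}_{m \times n} & \mathbf{B} \end{bmatrix}
\begin{bmatrix} \mathbf{u}_i \\ \mathbf{0}_m \end{bmatrix}
=
\begin{bmatrix} \mathbf{A}\mathbf{u}_i \\ \mathbf{0}_m \end{bmatrix}
=
\lambda_i \begin{bmatrix} \mathbf{u}_i \\ \mathbf{0}_m \end{bmatrix},
\]
because the off-diagonal zero blocks annihilate their respective factors. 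The entirely symmetric computation with $\mathbf{B}\mathbf{v}_j = \sigma_j \mathbf{v}_j$ confirms that each $(\sigma_j, [\mathbf{0}_n^{\mathrm T}, \mathbf{v}_j^{\mathrm T}]^{\mathrm T})$ is likewise an eigenpair of $\mathbf{A} \oplus \mathbf{B}$. This stage is purely mechanical and requires no further machinery.

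The second stage shows that no other eigenpairs exist. Here I would invoke the factorization of the characteristic polynomial. Since $\mathbf{A} \oplus \mathbf{B} - \mu \mathbf{I}_{n+m}$ is itself block diagonal with diagonal blocks $\mathbf{A} - \mu\mathbf{I}_n$ and $\mathbf{B} - \mu\mathbf{I}_m$, its determinant splits as
\[
\det\!\big(\mathbf{A} \oplus \mathbf{B} - \mu \mathbf{I}_{n+m}\big)
=
\det\!\big(\mathbf{A} - \mu\mathbf{I}_n\big)\,\det\!\big(\mathbf{B} - \mu\mathbf{I}_m\big).
\]
Consequently the spectrum of $\mathbf{A} \oplus \mathbf{B}$, counted with algebraic multiplicity, is exactly the union of the spectra of $\mathbf{A}$ and $\mathbf{B}$; in particular there are precisely $n+m$ eigenvalues (with multiplicity) and none lie outside the collection of the $\lambda_i$ and $\sigma_j$.

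To finish, I would check that the $n+m$ lifted vectors form a complete eigenbasis. Linear independence is immediate from the block separation: any vanishing linear combination forces the top-block coefficients (attached to the $\mathbf{u}_i$) and the bottom-block coefficients (attached to the $\mathbf{v}_j$) to vanish independently. Since the lemma is applied in the paper only to symmetric matrices (the Hessian $\mathbf{H}(\mathbf u)$ and its projections, via \eqref{Hblock} and \eqref{blockdiag0}), both $\mathbf{A}$ and $\mathbf{B}$ admit full orthonormal eigenbases, so the $n$ vectors $[\mathbf{u}_i^{\mathrm T},\mathbf{0}_m^{\mathrm T}]^{\mathrm T}$ together with the $m$ vectors $[\mathbf{0}_n^{\mathrm T},\mathbf{v}_j^{\mathrm T}]^{\mathrm T}$ span $\mathbb{R}^{n+m}$. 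There is no genuinely hard part in this result; the only point deserving care is the multiplicity bookkeeping when $\mathbf{A}$ and $\mathbf{B}$ share a common eigenvalue, which is precisely where the characteristic-polynomial factorization guarantees that the total multiplicity adds correctly and the eigenspaces of the two summands combine without collision.
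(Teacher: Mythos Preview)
Your proof is correct and entirely standard; the block multiplication verifies each claimed eigenpair, and the factorization of the characteristic polynomial accounts for multiplicities. Note, however, that the paper does not supply its own proof of this lemma: it is stated in the appendix as a quoted result from \cite{zhang2017matrix} and used as a black box (in the analysis of \eqref{eigensign} and \eqref{blockdiag0}), so there is no in-paper argument to compare against.
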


\begin{lemma}(Ref \cite{zhang2017matrix})\label{detblocklemma}
	Assume that   a  matrix  with a  size  of  $(n+m) \times(n+m)$,   
	can be  further  divided  into      $ n \times n$  and     $m \times m$   two blocks,
	its  determinant  can be calculated by 
	\begin{equation}\label{detblock}
	det \left[\begin{array}{ll}
	\mathbf {A} & \mathbf {B} \\
	\mathbf {C} & \mathbf {D}
	\end{array}\right]
	=det  (\mathbf {D})
	det\left(\mathbf {A}-\mathbf {B} \mathbf {D}^{-1} \mathbf {C}\right)
	\end{equation}
	when  the  matrix  $\mathbf D$ is 
	assumed to be an  inverse one.
\end{lemma}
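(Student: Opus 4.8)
The plan is to prove this Schur-complement determinant identity by a block LU-type factorization, thereby reducing the claim to two standard facts: multiplicativity of the determinant and the determinant rule for block-triangular matrices. The invertibility hypothesis on $\mathbf{D}$ is precisely what makes the factorization well defined.

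First I would exhibit the factorization
\[
\begin{bmatrix} \mathbf{A} & \mathbf{B} \\ \mathbf{C} & \mathbf{D} \end{bmatrix}
=
\begin{bmatrix} \mathbf{I}_{n} & \mathbf{B}\mathbf{D}^{-1} \\ \mathbf{O} & \mathbf{I}_{m} \end{bmatrix}
\begin{bmatrix} \mathbf{A} - \mathbf{B}\mathbf{D}^{-1}\mathbf{C} & \mathbf{O} \\ \mathbf{C} & \mathbf{D} \end{bmatrix},
\]
and verify it by direct block multiplication: the $(1,1)$ block returns $(\mathbf{A} - \mathbf{B}\mathbf{D}^{-1}\mathbf{C}) + \mathbf{B}\mathbf{D}^{-1}\mathbf{C} = \mathbf{A}$, the $(1,2)$ block returns $\mathbf{B}\mathbf{D}^{-1}\mathbf{D} = \mathbf{B}$, and the lower row returns $\mathbf{C}$ and $\mathbf{D}$ immediately. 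Taking determinants of both sides and applying $\det(\mathbf{X}\mathbf{Y}) = \det(\mathbf{X})\det(\mathbf{Y})$, the first factor (block upper-triangular with identity diagonal blocks) contributes $\det(\mathbf{I}_{n})\det(\mathbf{I}_{m}) = 1$, while the second factor (block lower-triangular) contributes $\det(\mathbf{A} - \mathbf{B}\mathbf{D}^{-1}\mathbf{C})\det(\mathbf{D})$, which is exactly the right-hand side of (\ref{detblock}).

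The only step that asks for genuine care, rather than presenting a real difficulty, is the block-triangular determinant rule invoked for each factor. I would either cite it as standard or derive it from the Leibniz expansion $\det(\mathbf{P}) = \sum_{\sigma} \mathrm{sgn}(\sigma) \prod_{i} P_{i,\sigma(i)}$: if the lower-left $m \times n$ block of $\mathbf{P}$ vanishes, then any permutation $\sigma$ contributing a nonzero summand must map the last $m$ indices among themselves, hence the first $n$ among themselves as well, so the sum splits as the product of the Leibniz expansions of the two diagonal blocks. Applying this to both triangular factors above closes the argument, and no separate treatment is needed for the degenerate sizes $n = 0$ or $m = 0$, where the identity is vacuous.
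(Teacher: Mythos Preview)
Your proof is correct and entirely standard: the block-LU factorization you write down is valid whenever $\mathbf{D}$ is invertible, and the determinant of each triangular factor is as you claim. The paper does not actually prove this lemma at all---it is simply quoted from the cited reference \cite{zhang2017matrix} as a known identity---so there is no approach to compare against; your argument supplies exactly the routine verification one would expect.
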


\begin{lemma}[\textbf{Weyl Theorem}]\label{weyltheo}
Assume  that 
$\mathbf A$, $\mathbf B \in \mathbb R^{n \times n}$ are Hermitian  matrix,  and  the  eigenvalues are  sorted  in  a  ascending  order. 
\begin{equation}
\begin{aligned}
\lambda_{1}(\mathbf{A}) & \le \lambda_{2}(\mathbf{A}) \le \cdots \le \lambda_{n}(\mathbf{A}) \\
\lambda_{1}(\mathbf{B}) & \le \lambda_{2}(\mathbf{B}) \le \cdots \le \lambda_{n}(\mathbf{B}) \\
\lambda_{1}(\mathbf{A}+\mathbf{B}) & \le \lambda_{2}(\mathbf{A}+\mathbf{B}) \le \cdots \le \lambda_{n}(\mathbf{A}+\mathbf{B})
\end{aligned}
\end{equation}

Then,  the  following  inequalities   hold:
\begin{equation}
\lambda_{i}(\mathbf{A}+\mathbf{B}) \geqslant\left\{\begin{array}{c}
\lambda_{i}(\mathbf{A})+\lambda_{1}(\mathbf{B}) \\
\lambda_{i-1}(\mathbf{A})+\lambda_{2}(\mathbf{B}) \\
\vdots \\
\lambda_{1}(\mathbf{A})+\lambda_{i}(\mathbf{B})
\end{array}\right.
\end{equation}
and 
\begin{equation}
\lambda_{i}(\mathbf A+\mathbf B) \le\left\{\begin{array}{c}
\lambda_{i}(\mathbf{A})+\lambda_{n}(\mathbf B) \\
\lambda_{i+1}(\mathbf{A})+\lambda_{n-1}(\mathbf{B}) \\
\vdots \\
\lambda_{n}(\mathbf{A})+\lambda_{i}(\mathbf{B})
\end{array}\right.
\end{equation}
for  $i=1,2,\dots, n$.
\end{lemma}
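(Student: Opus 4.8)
The plan is to derive the Weyl inequalities from the Courant--Fischer min--max characterization of the eigenvalues of a real symmetric (Hermitian) matrix. Recall that for such a matrix $\mathbf{M}\in\mathbb{R}^{n\times n}$, with eigenvalues written in ascending order, one has the two dual formulas
\begin{equation}
\lambda_{k}(\mathbf{M})
=
\min_{\dim\mathbb{V}=k}\ \max_{\mathbf{x}\in\mathbb{V},\,\|\mathbf{x}\|=1}\mathbf{x}^{\mathrm{T}}\mathbf{M}\mathbf{x}
=
\max_{\dim\mathbb{V}=n-k+1}\ \min_{\mathbf{x}\in\mathbb{V},\,\|\mathbf{x}\|=1}\mathbf{x}^{\mathrm{T}}\mathbf{M}\mathbf{x},
\end{equation}
where $\mathbb{V}$ ranges over the linear subspaces of $\mathbb{R}^{n}$ of the indicated dimension. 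First I would prove the lower-bound family; the upper-bound family then follows by a sign change.

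For the lower bound, fix $i$ and any pair $(j,k)$ of positive integers with $j+k=i+1$ (this is exactly the pattern $\lambda_{i}(\mathbf{A})+\lambda_{1}(\mathbf{B}),\ \lambda_{i-1}(\mathbf{A})+\lambda_{2}(\mathbf{B}),\dots$ listed in the statement). Let $\mathbb{V}_{A}$ be the span of eigenvectors of $\mathbf{A}$ associated with $\lambda_{j}(\mathbf{A}),\dots,\lambda_{n}(\mathbf{A})$, so $\dim\mathbb{V}_{A}=n-j+1$ and $\min_{\mathbf{x}\in\mathbb{V}_{A},\,\|\mathbf{x}\|=1}\mathbf{x}^{\mathrm{T}}\mathbf{A}\mathbf{x}=\lambda_{j}(\mathbf{A})$; define $\mathbb{V}_{B}$ analogously with $\dim\mathbb{V}_{B}=n-k+1$ and minimal Rayleigh quotient $\lambda_{k}(\mathbf{B})$. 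By the subspace dimension formula,
\begin{equation}
\dim(\mathbb{V}_{A}\cap\mathbb{V}_{B})\ \ge\ (n-j+1)+(n-k+1)-n\ =\ n-(j+k)+2\ =\ n-i+1,
\end{equation}
so $\mathbb{V}_{A}\cap\mathbb{V}_{B}$ contains a subspace $\mathbb{W}$ with $\dim\mathbb{W}=n-i+1$. Every unit vector $\mathbf{x}\in\mathbb{W}$ satisfies $\mathbf{x}^{\mathrm{T}}(\mathbf{A}+\mathbf{B})\mathbf{x}=\mathbf{x}^{\mathrm{T}}\mathbf{A}\mathbf{x}+\mathbf{x}^{\mathrm{T}}\mathbf{B}\mathbf{x}\ge\lambda_{j}(\mathbf{A})+\lambda_{k}(\mathbf{B})$, so feeding $\mathbb{W}$ into the max--min formula for $\lambda_{i}(\mathbf{A}+\mathbf{B})$ yields $\lambda_{i}(\mathbf{A}+\mathbf{B})\ge\lambda_{j}(\mathbf{A})+\lambda_{k}(\mathbf{B})$. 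Letting $(j,k)$ run over all admissible pairs produces the entire lower-bound list.

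For the upper bounds I would apply the inequalities just established to $-\mathbf{A}$ and $-\mathbf{B}$ together with the elementary identity $\lambda_{\ell}(-\mathbf{M})=-\lambda_{n+1-\ell}(\mathbf{M})$. From $\lambda_{i}(-\mathbf{A}-\mathbf{B})\ge\lambda_{j}(-\mathbf{A})+\lambda_{k}(-\mathbf{B})$ with $j+k=i+1$ one obtains $\lambda_{n+1-i}(\mathbf{A}+\mathbf{B})\le\lambda_{n+1-j}(\mathbf{A})+\lambda_{n+1-k}(\mathbf{B})$; relabelling $i'=n+1-i$, $j'=n+1-j$, $k'=n+1-k$ converts the constraint $j+k=i+1$ into $j'+k'=n+i'$, which is precisely the index pattern of the second displayed family in the lemma. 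The only genuinely load-bearing step is the dimension count for $\mathbb{V}_{A}\cap\mathbb{V}_{B}$, which supplies a common test subspace on which both Rayleigh quotients are bounded below simultaneously; the remainder is the standard Courant--Fischer theorem (which could instead simply be invoked, since the statement already credits Ref.\ \cite{zhang2017matrix}) plus routine bookkeeping, and I anticipate no real difficulty beyond keeping the ascending-order indexing consistent throughout.
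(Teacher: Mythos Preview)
Your argument is correct: the Courant--Fischer min--max principle together with the dimension count for $\mathbb{V}_{A}\cap\mathbb{V}_{B}$ yields the lower-bound family, and the sign-reversal trick with the relabelling $i'=n+1-i$ delivers the upper-bound family exactly as stated. This is the standard textbook proof of Weyl's inequalities.

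As for comparison with the paper: the paper does not prove this lemma at all. It is placed in the Appendix as a supporting result, stated without argument and implicitly attributed to the literature (the surrounding lemmas cite Ref.~\cite{zhang2017matrix}). So you are supplying strictly more than the paper does. Your proof is self-contained and would be appropriate if one wanted the exposition to stand on its own; the paper's choice to omit the proof is also reasonable given that Weyl's theorem is classical and only used as a tool in the analysis of the even-order case.
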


\begin{lemma}(Ref \cite{zhang2017matrix})\label{AB_BA_eig}
Given  
the    $ m \times n$    matrix  
$\mathbf A$
and   the   $n \times m$   matrix  
$\mathbf B$ ( $m < n $),
assuming  that 
the $m$ eigenvalues of 
$ 	\mathbf A
\mathbf B $
are  denoted  by 
$
\lambda_{1}, \lambda_{2},  \dots, \lambda_{m}$,
then  the $n$  eigenvalues  of  
$ 	\mathbf  B
\mathbf A $
are  given  by 
$
\lambda_{1}, \lambda_{2},  \dots, \lambda_{m}, 
0, 0$.
\end{lemma}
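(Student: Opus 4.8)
The plan is to reduce the claim to an identity between the characteristic polynomials of $\mathbf{A}\mathbf{B}$ and $\mathbf{B}\mathbf{A}$, and to extract that identity from the block determinant formula already recorded in Lemma \ref{detblocklemma}. For a scalar $\lambda \neq 0$ I would introduce the $(m+n)\times(m+n)$ matrix
\[
\mathbf{M}=\begin{bmatrix} \lambda\mathbf{I}_{m} & \mathbf{A}\\ \mathbf{B} & \mathbf{I}_{n}\end{bmatrix}
\]
and evaluate $\det\mathbf{M}$ in two ways, choosing which diagonal block plays the role of the invertible block $\mathbf{D}$ in (\ref{detblock}). First I would take the lower-right block $\mathbf{I}_{n}$ as the invertible one; then (\ref{detblock}) gives immediately $\det\mathbf{M}=\det(\mathbf{I}_{n})\det(\lambda\mathbf{I}_{m}-\mathbf{A}\mathbf{I}_{n}^{-1}\mathbf{B})=\det(\lambda\mathbf{I}_{m}-\mathbf{A}\mathbf{B})$.

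For the second evaluation I would perform a simultaneous swap of the two block rows and the two block columns of $\mathbf{M}$, producing $\mathbf{M}'=\bigl[\begin{smallmatrix}\mathbf{I}_{n}&\mathbf{B}\\ \mathbf{A}&\lambda\mathbf{I}_{m}\end{smallmatrix}\bigr]$ with $\det\mathbf{M}'=\det\mathbf{M}$, since the accompanying row and column permutations carry cancelling signs $(-1)^{mn}$. Now the lower-right block is $\lambda\mathbf{I}_{m}$, invertible because $\lambda\neq 0$, so (\ref{detblock}) yields $\det\mathbf{M}=\lambda^{m}\det(\mathbf{I}_{n}-\tfrac{1}{\lambda}\mathbf{B}\mathbf{A})=\lambda^{m-n}\det(\lambda\mathbf{I}_{n}-\mathbf{B}\mathbf{A})$, using the scaling $\det(\lambda\mathbf{I}_{n}-\mathbf{B}\mathbf{A})=\lambda^{n}\det(\mathbf{I}_{n}-\tfrac{1}{\lambda}\mathbf{B}\mathbf{A})$. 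Equating the two expressions and clearing the factor $\lambda^{m-n}$ produces the key identity
\[
\det(\lambda\mathbf{I}_{n}-\mathbf{B}\mathbf{A})=\lambda^{\,n-m}\,\det(\lambda\mathbf{I}_{m}-\mathbf{A}\mathbf{B}).
\]

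Both sides are polynomials in $\lambda$, so although the derivation assumed $\lambda\neq 0$, the identity extends to all $\lambda$ because two polynomials agreeing off a finite set agree everywhere. From it I would read off the conclusion directly: the characteristic polynomial of the $n\times n$ matrix $\mathbf{B}\mathbf{A}$ is $\lambda^{\,n-m}$ times the characteristic polynomial of the $m\times m$ matrix $\mathbf{A}\mathbf{B}$. Hence the spectrum of $\mathbf{B}\mathbf{A}$ consists of the $m$ eigenvalues $\lambda_{1},\dots,\lambda_{m}$ of $\mathbf{A}\mathbf{B}$ together with the eigenvalue $0$ of multiplicity $n-m$, matching algebraic multiplicities. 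Specialising to the case $n-m=2$ gives exactly the list $\lambda_{1},\lambda_{2},\dots,\lambda_{m},0,0$ used in the application.

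I expect no genuinely hard estimate to arise; everything is a pair of applications of (\ref{detblock}) plus elementary scaling. The step I would be most careful about is the bookkeeping of the block swap used for the second evaluation: I must verify that interchanging diagonal blocks of \emph{unequal} sizes $m$ and $n$ leaves the determinant invariant, which rests on the row-permutation sign $(-1)^{mn}$ being exactly cancelled by the column-permutation sign. As an alternative that avoids permutations altogether, I could instead establish the transposed form of (\ref{detblock}) (factoring out $\det$ of the top-left block) and apply it to $\mathbf{M}$ itself; the two routes give the same identity.
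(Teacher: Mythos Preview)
Your argument is correct: the block-determinant manipulation yields the Sylvester identity $\det(\lambda\mathbf{I}_{n}-\mathbf{B}\mathbf{A})=\lambda^{n-m}\det(\lambda\mathbf{I}_{m}-\mathbf{A}\mathbf{B})$, from which the eigenvalue statement follows immediately, and the bookkeeping on the block swap (sign $(-1)^{mn}$ from rows cancelling the same sign from columns) is handled properly.

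There is nothing to compare against, however: in the paper this lemma is not proved at all. It is simply quoted from the reference \cite{zhang2017matrix} and used as a black box in the analysis of $\mathbf{H}\mathbf{Z}\mathbf{Z}^{\mathrm T}$ versus $\mathbf{Z}^{\mathrm T}\mathbf{H}\mathbf{Z}$. Your derivation is a standard textbook proof of this classical fact and would serve perfectly well as a self-contained justification, but it is additional material rather than a reconstruction of anything the paper does.
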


The  following  theorem is  well  established     for   the   constrained  optimization  problem   to  identify  the  locally  optimal   solutions
(Page 332 in    \cite{Numerical}): 

\begin{theorem}[\textbf{Second-order necessary condition}]\label{second_order_necessary}\cite{Numerical}
	Suppose that
	for any  vector $ \mathbf w \in \mathbb V $,
	if 
	\begin{equation}\label{second_order}
	\mathbf w^{\mathrm T}
	\mathbf H (\mathbf v) 
	\mathbf w  
	\le 0   
	\end{equation}
	holds, then
	$\mathbf v $
	is a local maximum solution of (\ref{opti_ori}).
	And for  (\ref{opti_ori}), the set
	$\mathbb V $
	is defined as
	\begin{equation}\label{vdefine}
	\mathbb V=\{
	\mathbf w \in \mathbb R^{n}
	\vert   (\triangledown  g)^{\mathrm T} \mathbf w   =0
	\}=
	\rm Null [(\triangledown  g)^{\mathrm T} ]
	=
		\rm Null [\mathbf A ],
	\end{equation}
	where   $ \rm Null( \mathbf A) $ 
	denotes the null space  of $\mathbf A$  and $  \triangledown  g =\mathbf v $  denotes the gradient of the constraint: $ g (\mathbf v) = 
	\mathbf v^{\mathrm T}
	\mathbf v 
	-1
	=0 $.  
	
	If a stronger condition, i.e.,
	$ \mathbf w^{\mathrm T}
	\mathbf H (\mathbf v) 
	\mathbf w  <   0 $,   
	is satisfied, then,  
	$\mathbf v$
	is a   strict     local maximum  solution of (\ref{opti_ori}).
\end{theorem}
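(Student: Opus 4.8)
The plan is to establish both implications by a contradiction argument built on a second-order Taylor expansion of the Lagrangian $L(\mathbf v,\lambda)$ along feasible sequences converging to $\mathbf v$. Two facts already in place make this tractable. First, on the feasible set the constraint term of \eqref{Lagrangian_function} vanishes, so there $L(\cdot,\lambda)$ coincides, up to the positive factor $\tfrac1m$, with the objective of \eqref{opti_ori}; hence comparing objective values is the same as comparing values of $L(\cdot,\lambda)$. Second, since $\mathbf v$ is a stationary point, the first-order condition \eqref{definition} forces $\triangledown_{\mathbf v}L(\mathbf v,\lambda)=\mathbf 0$. Consequently, for any feasible $\mathbf v'$ close to $\mathbf v$ the difference of objective values is governed at leading order purely by $\mathbf H(\mathbf v)$ acting on the displacement direction, and the constraint geometry forces that direction to lie, in the limit, in the tangent space $\mathbb V$ of \eqref{vdefine}.

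For the strict case I would argue by contradiction. Assume $\mathbf w^{\mathrm T}\mathbf H(\mathbf v)\mathbf w<0$ for every nonzero $\mathbf w\in\mathbb V$, yet $\mathbf v$ is not a strict local maximizer. Then there is a sequence of feasible points $\mathbf v_k\to\mathbf v$ with $\mathbf v_k\neq\mathbf v$ and objective value at least that at $\mathbf v$. Setting $t_k=\|\mathbf v_k-\mathbf v\|\to 0$ and $\mathbf d_k=(\mathbf v_k-\mathbf v)/t_k$, I would pass to a subsequence (the unit sphere being compact) along which $\mathbf d_k\to\mathbf d$ with $\|\mathbf d\|=1$. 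Expanding the constraint $g$ and dividing by $t_k$ gives $\mathbf v^{\mathrm T}\mathbf d=0$, i.e. $\mathbf d\in\mathbb V$; expanding $L(\cdot,\lambda)$ to second order, using $\triangledown_{\mathbf v}L(\mathbf v,\lambda)=\mathbf 0$ and dividing by $t_k^{2}$, yields in the limit $\mathbf d^{\mathrm T}\mathbf H(\mathbf v)\mathbf d\ge 0$. This contradicts the strict hypothesis on $\mathbb V$, so $\mathbf v$ must be a strict local maximizer.

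The non-strict statement follows the same template, and this is where the main obstacle lies. The identical expansion shows only that any feasible ascent sequence produces a limiting direction $\mathbf d\in\mathbb V$ with $\mathbf d^{\mathrm T}\mathbf H(\mathbf v)\mathbf d\ge 0$, which contradicts \emph{strict} negativity but not the weak inequality $\le 0$: when the quadratic form degenerates to zero along $\mathbf d$, the sign of the objective gap is decided by higher-order terms the leading expansion cannot detect, so generic second-order theory does not close the argument. I would resolve this by invoking the structure specific to the present setting rather than a generic principle: in every configuration where the reduced matrix $\mathbf K$ of \eqref{Mhess} is negative semidefinite, its only zero eigenvalues come from the directions removed by the projector (the range of $\mathbf A$), so the restriction of $\mathbf H(\mathbf v)$ to $\mathbb V$ is in fact negative \emph{definite}; the weak hypothesis then collapses onto the strict one on $\mathbb V$, and the first part applies verbatim. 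Establishing this reduction — that the degenerate directions never lie inside $\mathbb V$ in the cases of interest — is the crux of making the non-strict claim rigorous.
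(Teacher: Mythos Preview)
The paper does not supply a proof of this theorem at all: it is quoted as a standard result from the cited optimization textbook \cite{Numerical}, and the surrounding appendix text only explains how to \emph{use} it (via the projected Hessian $\mathbf P$ and the matrix $\mathbf K$ of \eqref{Mhess}), not how to \emph{prove} it. So there is no ``paper's own proof'' to compare against.

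That said, your argument deserves comment on its merits. The strict case is handled correctly and is exactly the standard contradiction/compactness/Taylor argument one finds in Nocedal--Wright; nothing to add there.

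For the non-strict case you have put your finger on a genuine issue, but your proposed resolution is not appropriate for the statement as written. The theorem is phrased as a general second-order criterion for the abstract problem \eqref{opti_ori}; it is not a statement about regular simplex tensors. Your rescue --- arguing that in the configurations of interest the null directions of $\mathbf K$ lie only in the range of $\mathbf A$, so semidefiniteness on $\mathbb V$ collapses to definiteness --- imports problem-specific structure from Sections~\ref{station}--\ref{locallymaximized} into what is stated as a generic optimization fact. That is circular as a proof of the theorem, and in any case unnecessary for the paper's purposes: every place the paper actually invokes this theorem (Lemmas~\ref{Theorem_structureoflocal}--\ref{Theorem_structureoflocaleven2}, Corollary~\ref{coroll}) it checks the \emph{strict} sign on the tangent space, so only the second-order \emph{sufficient} condition is ever used. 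The non-strict clause, read literally as ``$\le 0$ on $\mathbb V$ implies local maximum,'' is simply not a correct general statement (the title ``necessary condition'' and the direction of implication are mismatched in the paper's phrasing), and you should not try to prove it as such.
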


In  practice, instead of  directly utilizing  Theorem 
\ref{second_order_necessary},  
it  is  more  preferred to  identify the  local  extremum  by  checking  the  positive or negative  definiteness of  the  projected  Hessian  matrix (denoted 
$\mathbf P 
$).  
It  is  calculated  by  
\begin{equation}\label{Pmatrix}
\mathbf P = 
\mathbf Q_{2}^{\mathrm T}
\mathbf H(\mathbf  v) 
\mathbf Q_{2},  
\end{equation}
where 
$  \mathbf Q_{2} $ is  obtained by
QR  factorization of   $     \triangledown  g $:
\begin{equation}\label{QR_factor}
\begin{split}
\triangledown  g
&=\mathbf Q
\begin{bmatrix}
\mathbf R   \\
\mathbf 0
\end{bmatrix} =
\begin{bmatrix}
\mathbf Q_{1}  &   \mathbf Q_{2}
\end{bmatrix}
\begin{bmatrix}
\mathbf R  \\
\mathbf 0
\end{bmatrix}
=\mathbf Q_{1}
\mathbf R
\end{split},
\end{equation}
where 
$  \mathbf Q $ 
is   an     
orthogonal  matrix, and $ \mathbf R$ is a square upper triangular matrix. 
In this  case, 
$ \mathbf R$ is  reduced  to  a  scalar  since  
$ \triangledown  g $  is  a  column  vector  and  
$\mathbf 0$ is  an    vector  with  all  elements  equal to 0.  
See more  details for  this  part in  Page 337 of  \cite{Numerical}. 

Furthermore,  based on  (\ref{vdefine}), we  can  conclude  that  $   \mathbf w$  must  also  lie  in  the  orthogonal complement space of $ \mathbf  v$. 
Therefore,  $   \mathbf w$ can be linearly expressed  by the column vectors of  $\mathbf  P_{\mathbf  v } ^{\bot}$.
Assume that  $ \mathbf z \in \mathbb R^{(n-1) \times 1} $ is the coefficient vectors, it holds that  
$   \mathbf w   =  \mathbf  P_{\mathbf  v } ^{\bot}  \mathbf  z$.  
Then it can be  verified   that 
$  \mathbf v^{\mathrm T} \mathbf w 
=   \mathbf v^{\mathrm T}   \mathbf  P_{\mathbf  v } ^{\bot}  \mathbf  z
=  
\mathbf v^{\mathrm T} (\mathbf  I_{n} -
\mathbf  v (\mathbf  v^{\mathrm T}\mathbf  v)^{-1} \mathbf  v^{\mathrm T} )
\mathbf  z
= \mathbf  0
$  holds.  
Therefore,   (\ref{second_order}) can be  rewritten as 
$ 	\mathbf w^{\mathrm T}\mathbf H (\mathbf v) \mathbf w  
=
\mathbf  z^{\mathrm T}    (\mathbf  P_{\mathbf  v } ^{\bot})^{\mathrm T}    \mathbf H (\mathbf v)  \mathbf  P_{\mathbf  v }^{\bot}  \mathbf  z     \le 0 $.
Since  $ \mathbf  z $ can be   arbitrary  vectors,  checking  (\ref{second_order}) 
is equivalent to  checking  negative semi-definiteness    of  the  matrix, denoted  
$\mathbf  M $, with the  form of 
\begin{equation}
\notag
\mathbf {M} =
(\mathbf  P_{\mathbf  v } ^{\bot})^{\mathrm T}    \mathbf H (\mathbf v)  \mathbf  P_{\mathbf  v }^{\bot}
=
(\mathbf  P_{\mathbf  v } ^{\bot})    \mathbf H (\mathbf v)  \mathbf  P_{\mathbf  v }^{\bot}
,
\end{equation}  
which  
corresponds to 
	(\ref{Mhess}).

Note that 
there are only one constraint for model (\ref{opti_ori}).
When more constraints are included, it can be similarly analyzed. 
For example. 
for    model  (\ref{optmodel}) with two constraints,  $\mathbf A$  should be  defined  as  
\begin{equation}
\notag
\mathbf A =  [\triangledown  g_{1}(\mathbf u) , \triangledown  g_{2}(\mathbf u) ] =
[\mathbf u, \mathbf 1_{n}]  \in   \mathbb R^{n  \times 2 } .
\end{equation}
And 
the 
locally    optimal   solutions of   (\ref{optmodel})
can  be  identified  by
checking the  negative 
definiteness of the   matrix  as  defined  in (\ref{Mhess2}).

\bibliographystyle{siamplain}
\bibliography{simplexref}

\end{document}